%%%%%%%%%%%%%%%%%%%%%%%%%%%%%%%%%%%%%%%%%%%%%
%                                           %
%        Last modified   March, 6,  2013    %
%                                           %
%%%%%%%%%%%%%%%%%%%%%%%%%%%%%%%%%%%%%%%%%%%%%

\documentclass{amsart}
\usepackage{amssymb}
\usepackage{graphics,color}
\usepackage[latin1]{inputenc}
\usepackage[pagebackref=false]{hyperref}

% THEOREM Environments -----

%----------------------------------------------
\newtheorem{theorem}{Theorem}[section]
\newtheorem{thm}[theorem]{Theorem}
\newtheorem{lem}[theorem]{Lemma}
\newtheorem{prop}[theorem]{Proposition}
\newtheorem{cor}[theorem]{Corollary}
\theoremstyle{definition}
\newtheorem{defn}[theorem]{Definition}
\theoremstyle{remark}
\newtheorem{rem}[theorem]{Remark}
\newtheorem{ex}[theorem]{Example}

% MATH -------------------------------------------------------------------

\newcommand{\Max}{\mbox{Max}\,}

\renewcommand{\H}{\mbox{H}}

\newcommand{\N}{\mathbb{N}}

\newcommand{\fm}{\mathfrak{m}}

\newcommand{\AP}{\mbox{AP}}
\newcommand{\ord}{\mbox{ord}}

%%% ----------------------------------------------------------------------
\begin{document}

\bibliographystyle{amsplain}

\title{On monomial curves obtained by gluing }

\author{Raheleh Jafari}
\address{Raheleh Jafari\\School of Mathematics, Institute for
Research in Fundamental Sciences (IPM) P. O. Box: 19395-5746,
Tehran, Iran.}

\email{rjafari@ipm.ir}

\author[S. Zarzuela Armengou]{Santiago Zarzuela Armengou}
\address{ Santiago Zarzuela Armengou\\Departament d'Àlgebra i Geometria, Universitat de Barcelona, Gran Via 585,
08007 Barcelona, Spain.}

\email{szarzuela@ub.edu}

\subjclass[2010]{13A30, 13H10, 13P10, 13D40}

\keywords{numerical semigroup ring, tangent cone, Hilbert function, semigroup gluing, Cohen-Macaulay. \\
Raheleh Jafari was in part supported by a grant from IPM (No. 900130068). \\
Santiago Zarzuela Armengou was supported by MTM2010-20279-C02-01.}

\maketitle

%%% ----------------------------------------------------------------------
\begin{abstract}
We study arithmetic properties of tangent cones associated to affine
monomial curves, using the concept of gluing. In particular we
characterize the Cohen-Macaulay and Gorenstein properties of tangent
cones of some families  of monomial curves obtained by gluing. Moreover,
we provide new families of monomial curves with non--decreasing Hilbert functions.
\end{abstract}

\section{introduction}
A monomial curve $C$ in the affine space $\mathbb{A}^d_k$ over a
field $k$ consists on the set of points defined parametrically by
$X_{1}=t^{m_1}, \ldots, X_{d}=t^{m_d}$, for some positive integers
$m_1<\cdots<m_d$. In order to be sure that different
parameterizations give rise to different monomial curves, we may
assume that $\gcd(m_1,\ldots,m_d)=1$. In fact, it is known that the
set $C$ is an affine variety whose coordinate ring is
$\mathcal{R}=k[t^{m_1},\ldots,t^{m_d}]$, see for instance E. Reyes, R. H.
Villarreal and L. Zárate \cite{RVZ}. The set
$S=\{r_1m_1+\cdots+r_dm_d ; r_i\geq 0\}$ is a subset of the
non--negative integers $\N\cup\{0\}$ which is closed under addition,
and the condition $\gcd(m_1,\ldots,m_d)=1$ is equivalent to the
property $\#\,N\setminus S<\infty$. In other words,
$S=<m_1,\ldots,m_d>$ is a numerical semigroup minimally generated by
the unique minimal system of generators $\{m_1,\ldots,m_d\}$. The
coordinate ring $\mathcal{R}$ is called the  numerical semigroup
ring associated to $S$ and it is denoted by $k[S]$. Since we are
interested in the arithmetical properties at the origin, which is
the only singular point of the curve $C$, we will consider the ring
$R=k[[t^{m_1},\ldots,t^{m_d}]] = k[[S]]$. Note that $R$ is a
complete one--dimensional local domain with maximal ideal
$\fm=(t^{m_1},\ldots,t^{m_d})$. We also consider the tangent cone
associated to $k[[S]]$; that is the graded ring
 \[ G(S):=\bigoplus_{n\geq 0} \fm^n/\fm^{n+1}. \]

  We say that a numerical semigroup $S$ is complete intersection if the ring $k[[S]]$ is complete intersection (and similarly for other properties like Gorenstein, Cohen-Macaulay or Buchsbaum).
  J.~C.~Rosales introduced the concept of gluing in his PhD thesis \cite{R0} in order to characterize complete intersection numerical semigroups. In particular, he proved that a numerical semigroup other than $\N$ is complete intersection if and only if
  it is a gluing of two complete intersection numerical semigroups. The definition of gluing can also be
  generalized to affine semigroups, as finitely generated submonoids of $\N^n$, see \cite{R}. By E. Kunz \cite{K} it is also well known that the Gorenstein property of a numerical semigroup $S$ is equivalent to the symmetric property of $S$. This property is in fact preserved by gluing, in the sense that the gluing of two symmetric numerical semigroups is symmetric, see P. A. García-Sánchez and J. C. Rosales \cite[Proposition 8.11]{GR}. Hence the concept of gluing is an effective tool to construct families of complete intersection numerical semigroups and also families of Gorenstein numerical semigroups which are not complete intersection, and in this way several authors have studied and used gluing techniques, as for instance F. Arslan and P. Mete \cite {AM}, F. Arslan, P. Mete and M. \c{S}ahin \cite{AMS}, M. Morales and A. Thoma \cite{Mo}, M. \c{S}ahin \cite{Sah}, or A. Thoma \cite{T}.

 On the contrary, the study of the tangent cones of numerical semigroups obtained by gluing has not been so carefully analyzed. As a natural approach we may ask which properties of the tangent cones are preserved under gluing. At first instance the answer is not so hopeful: Arslan-Mete-\c{S}ahin  give in \cite{AMS} an example of two monomial curves whose tangent cones are Cohen-Macaulay
 but their gluing has a non-Cohen-Macaulay tangent cone. Then, they introduce a special type of
gluing, called nice gluing, that in fact preserves the
Cohen-Macaulay property of tangent cones \cite[Theorem 2.6]{AMS}. On the other hand, it
is well known that if the tangent cone $G(S)$ is Cohen-Macaulay then
the Hilbert function of $k[[S]]$ (that we also call the Hilbert
function of $S$) is non--decreasing, thus the previous result has
the nice consequence that the Hilbert function of a numerical
semigroup obtained by the nice gluing of two Cohen-Macaulay
numerical semigroups is non--decreasing. In the same paper, they
also generalize this result showing that if a numerical semigroup
$S$ is a nice gluing of two numerical semigroups $S_1$ and $S_2$
such that $S_1$ has a non--decreasing Hilbert function and $G(S_2)$
is Cohen-Macaulay, then $S$ has a non--decreasing Hilbert function,
\cite[Theorem 3.1]{AMS}. This result allows to provide large
families of Gorenstein monomial curves with non--Cohen-Macaulay
tangent cones having non-decreasing Hilbert functions, giving
support to a conjecture due to M. E. Rossi saying that every
$1$-dimensional Gorenstein local ring  has a non--decreasing Hilbert
function.

In this paper, we introduce a new kind of gluing, that we call specific
gluing, which contains a large class of glued numerical semigroups obtained from two given numerical semigroups
$S_1$ and $S_2$. In particular it
contains all the numerical semigroups $S$ obtained by
a nice gluing of two arbitrary numerical semigroups $S_1$ and $S_2$ such that $G(S_2)$ is Cohen-Macaulay.
For instance, one of our results says that if $S$ is a specific gluing of two numerical semigroups $S_1$ and $S_2$,
then $G(S)$ is Cohen-Macaulay if and only if $G(S_1)$ is Cohen-Macaulay, see Theorem \ref{nc}. In particular, a nice gluing of two numerical
 semigroups $S_1$ and $S_2$ such that $G(S_1)$ and $G(S_2)$ are Cohen-Macaulay has a Cohen-Macaulay tangent cone, which provides a different simple proof for the already cited \cite[Theorem 2.6]{AMS}. Our approach enables us to prove a similar result for the Gorenstein property of the tangent cone in Theorem \ref{Gor}.
We also generalize \cite[Theorem 3.1]{AMS}, by showing that the Hilbert function of a specific gluing $S$ of two numerical semigroups $S_1$  and $S_2$ is non--decreasing, provided that $S_1$ has a non--decreasing Hilbert function. As a special case of gluing, we study more in detail extensions and state that for an integer $q>1$, all the extensions of a numerical semigroup by $q$, except finitely many of them, have non--decreasing Hilbert functions. As a consequence we may construct a large family of free monomial curves, with non--decreasing Hilbert functions.

A quite usual method to study the tangent cone of a local ring
consists on finding a presentation of it in terms of the ideal of
initial forms of a defining ideal of the ring. This is not easy, but
sometimes it can be done by an explicit computation of standard
basis, as for instance it was done in L. Robbiano and G. Valla
\cite{RV}. For the case of monomial curves, F. Arslan used in
\cite{A} a Gröbner basis strategy in order to compute this ideal of
initial forms. On the other hand, gluing is a technique somehow well
behaved to deal with presentations, in the sense that given a
numerical semigroup $S$ which is a gluing of two numerical
semigroups $S_1$ and $S_2$, one has a control of a presentation of
$k[[S]]$ in terms of the presentations of $k[[S_1]]$ and $k[[S_2]]$.
The combination of these two strategies is basically the technique
used by Arslan-Mete-\c{S}ahin in \cite{AMS}. On the contrary, our
approach is completely different and based on Apéry sets, something
easier to compute and more intrinsically related to the semigroup
itself. This approach has been used several times in studying
tangent cones of monomial curves. In fact, we have already used this
method in a previous work jointly with T. Cortadellas, see T.
Cortadellas, R. Jafari, and S. Zarzuela \cite{CJZ}, which is based
on the study of the Apéry tables of monomial curves introduced in T.
Cortadellas and S. Zarzuela \cite{CZ1}. Also, M. D'Anna, V. Micale
and A. Sammartano \cite{DMS} have recently used Apéry sets to
characterize the complete intersection property of the tangent cone
of a monomial curve. And we have used some of the ideas involved in
the paper by L. Bryant \cite{B} as well.

For the general concepts of Commutative Algebra used in this paper one may consult the book of W. Bruns and J. Herzog \cite{BH}, while for the general notations and results on numerical semigroups and numerical semigroup rings we shall use the books by P. A. García Sánchez and J. C. Rosales \cite{GR} and by V. Barucci, D. E. Dobbs, and M. Fontana \cite{BDF}. Some of the explicit examples along  this paper have been computed by using the NumericalSgps package of GAP \cite{Num}.

This work was initiated during a two months stay in the year 2012 of the first author at the Institute of Mathematics of the University of Barcelona (IMUB), in the frame of its program of visiting researchers. Both authors would like to thank the IMUB for its hospitality and support. Finally, we also would like to thank Tere Cortadellas for a careful reading of this manuscript.

\section{Preliminaries}
Throughout $S=<m_1,\ldots,m_d>$ will be a numerical semigroup
minimally generated by $m_1<\cdots<m_d$ and $M=S\setminus\{0\}$ will
denote the maximal ideal of $S$. Recall that a relative ideal of $S$
is a nonempty set $H$ of integers such that $H+S\subseteq H$ and
$s+H\subseteq S$ for some $s\in S$. A relative ideal of $S$ is
called an ideal if it is contained in $S$.  Note that if $H_1$ and
$H_2$ are relative ideals of $S$, then $H_1+H_2=\{h_1+h_2; h_1\in
H_1 , h_2\in H_2\}$ is also a relative ideal of $S$. In particular,
for $z\in \mathbb{Z}$, $z+S=\{z+s; s\in S\}$ is the principal
relative ideal of $S$ generated by $z$. Let $\nu$ denote the
$t$-adic valuation in $k[[t]]$. If $I$ is a fractional ideal of
$k[[S]]$ then $\nu(I)$ is a relative ideal of $S$ and for fractional
ideals $J\subset I$, we have $\lambda(I/J)=\#\nu(I)\setminus
\nu(J)$. Hence $\nu(k[[S]])=S$ and
$\nu(\fm^n)=nM=M+\overset{n}{\cdots}+M$ for $n\geq 1$.

The element $t^{m_1}$, where $m_1$ is the smallest element in the
set of generators of $S$, generates a minimal reduction of
$\fm$ and $m_1$ is equal to the multiplicity of $R = k[[S]]$. The
element $m_1$ is called the multiplicity of $S$ and it is denoted by
$m(S)$.

For an element $s\in S$, the maximum  integer $n$ such that $nM$
contains  $s$, is called the order of $s$ and it is denoted by
$\ord_S(s)$. In other words, $s\in nM\setminus (n+1)M$ if and only
if $n=\ord_S(s)$. So that  $s$ may be written as
$s=\sum^d_{i=1}r_im_i$ $(r_i\geq 0)$, such that $\sum^d_{i=1}r_i=n$.
We call this representation a maximal expression of $s$. Because $S$ is minimally generated by $m_1<\cdots<m_d$ we have obviously
that $\ord_S(m_i)=1$ for all $i=1, \dots, d$.

We will denote by $\AP(S,s)$ the Apéry set of $S$ with respect to $s$; that
is the set of the smallest elements in $S$ in each congruence class
modulo $s$, equivalently, the set of elements $x\in S$ such that
$x-s\notin S$. In particular $S=\AP(S,s)+rs$, $r\in\N$.
 We consider a natural partial ordering
$\preceq$ on $S$ where, for all elements $x$ and $y$ in $S$,
$x\preceq y$ if there is an element $z$ in $S$ such that $y=x+z$. Note that if $y\in \AP(S,s)$ and $x\in S$ is such that $x\preceq y$, then $x\in \AP(S,s)$. We use also another partial ordering denoted by $\preceq_M$, where
$x\preceq_M y$ if $y= x+z$ and $\ord(y)=\ord(x)+\ord(z)$ for some
$z\in S$. Considering these orderings, the maximal elements of
$\AP(S,n)$ are denoted respectively by $\Max\AP(S,s)$ and
$\Max_M\AP(S,s)$. It is clear that $\Max\AP(S,s) \subseteq \Max_M\AP(S,s)$.

For an element $s$ of order $n$, we denote the initial form of
$t^s\in\fm^n\setminus\fm^{n+1}$ by
$(t^s)^*\in\fm^n/\fm^{n+1}\hookrightarrow G(S)$. Indeed we consider
 the map
\begin{equation}
\begin{array}{lll}
%\label{order}
S & \longrightarrow & G(S)\\
s & \mapsto & (t^s)^{\ast}.
\end{array}
\end{equation}

Note that   $(t^s)^*(t^{s'})^*=0$ for two elements $s,s'\in S$, if
and only if $\ord_S(s+s')>\ord_S(s)+\ord_S(s')$. This fact allows to state
the following useful lemma.

\begin{lem}\label{zero}
The following statements hold for $e=m(S)$.
\begin{enumerate}
\item $(t^{ae})^*\neq 0$ for all $a>0$. In particular $\ord_S(ae)=a$
for all $a>0$.
\item If $(t^x)^*$ is a non--zero--divisor over the set of elements of the form $(t^{jx})^{\ast} \in G(S)$ with $1 \leq j \leq e-1$, then $x=ae$ for some $a\in\N$.
\end{enumerate}
\end{lem}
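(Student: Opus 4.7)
My plan is to prove the two statements in order, with part (2) using part (1) in an essential way.

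For part (1), I would exploit that $e=m_1$ is the smallest generator of $M$. The expression $ae = m_1 + m_1 + \cdots + m_1$ ($a$ summands) shows $ae \in aM$, giving $\ord_S(ae) \geq a$. Conversely, if $ae \in nM$ then $ae$ is a sum of $n$ elements of $M$, each $\geq e$, so $ae \geq ne$ and $n \leq a$. Hence $\ord_S(ae) = a$, which means $ae \in aM \setminus (a+1)M$, and so $(t^{ae})^{\ast} \neq 0$.

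For part (2), the key step is translating the non-zero-divisor hypothesis using the criterion recalled just before the lemma: $(t^s)^{\ast}(t^{s'})^{\ast} = 0$ if and only if $\ord_S(s+s') > \ord_S(s) + \ord_S(s')$. The hypothesis therefore reads $\ord_S((j+1)x) = \ord_S(x) + \ord_S(jx)$ for each $1 \leq j \leq e-1$, and a straightforward induction on $j$ yields $\ord_S(jx) = j\cdot\ord_S(x)$ for all $1 \leq j \leq e$. Writing $n := \ord_S(x)$ and taking $j=e$, I record the identity $\ord_S(ex) = en$.

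To finish, I apply part (1) from a different angle: the integer $ex = x \cdot e$ is literally $x$ copies of $m_1 = e$, so part (1) with $a := x$ gives $\ord_S(ex) = x$. Equating the two computations of $\ord_S(ex)$ yields $x = en = e\cdot\ord_S(x)$, so $x = ae$ with $a := \ord_S(x) \in \N$. The main conceptual obstacle is noticing the ``change of viewpoint'' $e\cdot x = x\cdot e$, which allows the same integer to be counted both as $e$ copies of $x$ (producing the order $en$ via the hypothesis) and as $x$ copies of $e$ (producing the order $x$ via part (1)); once this is seen, the proof reduces to one line.
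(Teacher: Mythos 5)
Your proof is correct and follows essentially the same route as the paper's: part (1) by comparing $ae$ against a sum of $n$ elements each at least $e$ (the paper phrases this as a contradiction from a representation $ae=\sum r_im_i$ with $\sum r_i>a$), and part (2) by turning the non--zero--divisor hypothesis into $\ord_S((j+1)x)=\ord_S(x)+\ord_S(jx)$, inducting to get $\ord_S(ex)=e\,\ord_S(x)$, and equating this with $\ord_S(ex)=x$ from part (1). No gaps.
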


\begin{proof}
If $e=1$, then $S=\N$ and both statements are clear. So we may assume
that $e>1$.

(1) If $(t^{ae})^*=0$, then $\ord_S(ae)>a$. Thus
$ae=\sum^d_{i=1}r_im_i$ for some integers $r_i\geq 0$ such that
$\sum^d_{i=1}r_i>a$. Note that $e=m_1<m_i$ for $2\leq i\leq d$, so
that $ae=\sum^d_{i=1}r_im_i\geq (\sum^d_{i=1}r_i)e>ae$ which is a
contradiction.

(2) By hypothesis, $(t^{x})^*(t^{(e-i)x})^*\neq0$ for all $1\leq i\leq e-1$. Taking $i=e-1$ we get that
$\ord_S(2x)=2\ord_S(x)$ and repeating this process recursively we get that
$\ord_S(ex)=e\ord_S(x)$. On the other hand $\ord_S({ex})=x$ by part (1). Hence $x=\ord_S(x)e$.
\end{proof}

Because $t^{m_1}$ is a minimal reduction of $\fm$, its initial form $(t^{m_1})^*$ is a homogeneous parameter of the graded ring $G(S)$. Hence $G(S)$ is Cohen-Macaulay if and only if $(t^{m_1})^*$ is a non-zero divisor. The following lemma will be very useful in order to detect the Cohen-Macaulay property of the tangent cone: the implications $(1) \Rightarrow (2) \Rightarrow (3) \Rightarrow (4) \Rightarrow (5)$ are direct consequences of the previous considerations, while the implication $(5) \Rightarrow (1)$ is a well known result proved for the first time by A. García in \cite[Theorem 7, Remark 8]{G}, see also \cite[Remark 2.11 ]{CJZ} for a different proof.

\begin{lem}\label{CM}
The following statements are equivalent for $e=m(S)$.
\begin{enumerate}
\item $G(S)$ is Cohen-Macaulay.
\item $(t^e)^*$ is a non--zero--divisor of $G(S)$.
\item $(t^e)^*$ is a non--zero--divisor over the set of elements of the form $(t^s)^{\ast} \in G(S)$.
\item $\emph\ord_S(s+e)=\emph\ord_S(s)+1$ for all $s\in S$.
\item $\emph\ord_S(w+ae)=\emph\ord_S(w)+a$ for all $w\in\emph\AP(S,e)$ and $a\geq
0$.
\end{enumerate}
\end{lem}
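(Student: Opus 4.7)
The plan is to establish the chain of implications $(1) \Rightarrow (2) \Rightarrow (3) \Rightarrow (4) \Rightarrow (5)$ as the authors indicate are immediate, and then invoke García's classical theorem for the only nontrivial step $(5) \Rightarrow (1)$.

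For $(1) \Rightarrow (2)$, I would use that $t^{m_1}=t^e$ generates a minimal reduction of $\fm$, so its initial form $(t^e)^*$ is a homogeneous parameter of the one-dimensional graded ring $G(S)$; in a Cohen-Macaulay ring any such parameter is a non-zero-divisor. The implication $(2) \Rightarrow (3)$ is immediate, since condition (3) merely restricts the non-zero-divisor property to monomials $(t^s)^*$.

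For $(3) \Rightarrow (4)$ I would appeal to Lemma \ref{zero}(1), which gives $\ord_S(e)=1$ and $(t^e)^*\neq 0$. For an arbitrary $s\in S$, the element $(t^s)^*$ is nonzero in $G(S)$ by the definition of order, so (3) forces $(t^e)^*(t^s)^*\neq 0$; by the remark just before Lemma \ref{zero} this means $\ord_S(s+e)=\ord_S(s)+\ord_S(e)=\ord_S(s)+1$. For $(4) \Rightarrow (5)$ I would proceed by induction on $a$: the case $a=0$ is trivial, and for $a\geq 1$,
\[
\ord_S(w+ae)=\ord_S\bigl((w+(a-1)e)+e\bigr)=\ord_S(w+(a-1)e)+1=\ord_S(w)+a
\]
by (4) and the inductive hypothesis. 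Note that nothing here actually requires $w\in\AP(S,e)$, so the argument yields the ostensibly stronger statement $(4)\Rightarrow (4)$ for all $a$, and in particular (5).

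The main obstacle is the reverse implication $(5) \Rightarrow (1)$, which I would not reprove from scratch: this is precisely the content of A. García's theorem \cite[Theorem 7, Remark 8]{G} (with an alternative proof available in \cite[Remark 2.11]{CJZ}). The subtlety is that, from (5), one needs to check that $(t^e)^*$ kills no nonzero element of $G(S)$, not merely no monomial $(t^s)^*$; an arbitrary homogeneous element is a $k$-linear combination of monomials $(t^{s_1})^*,\dots,(t^{s_r})^*$ with $s_i$ in the same congruence class modulo $e$ only up to passing to the Apéry decomposition, and the argument requires carefully writing each $s_i=w_i+a_ie$ with $w_i\in\AP(S,e)$ and using (5) to control the orders of the resulting cross terms. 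I would simply cite the reference for this step.
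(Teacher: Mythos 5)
Your proposal is correct and takes essentially the same route as the paper: the chain $(1)\Rightarrow(2)\Rightarrow(3)\Rightarrow(4)\Rightarrow(5)$ is deduced directly from the preceding observations (the minimal reduction $t^{m_1}$ giving a homogeneous parameter, and the criterion $(t^s)^*(t^{s'})^*=0$ iff $\ord_S(s+s')>\ord_S(s)+\ord_S(s')$), while the only substantive implication $(5)\Rightarrow(1)$ is delegated to Garc\'{\i}a's theorem, exactly as the paper does. Your added remark correctly identifies why $(5)\Rightarrow(1)$ is the nontrivial step.
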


%\begin{rem}
Let $e=m(S)$ and set $r$ the reduction number of $S$, that is,
$$r=\min\{r\in\N ; \fm^{r+1}=t^e\fm^r\}=\min\{r ; (r+1)M=e+rM \}$$
Observe that by the definition of Apéry set with respect to $e$, the
reduction number of $S$ is at most $e-1$. It's clear that
$\ord_S(s+e)=\ord_S(s)+1$ for all $s\in S$ with $\ord_S(s)\geq r$,
hence in order to check the Cohen-Macaulay property of $G(S)$ it is
enough to check the condition $(3)$ in the above lemma only for
elements $s\in S$ with $\ord_S(s)\leq r$. This motivates the
introduction of the following definition: for any $x\in S$ let
\begin{equation}
l_x(S):=\max\{\ord_S(s+x)-\ord_S(x)-\ord_S(s) ; s\in S, \ord(s)\leq
r\}.
\end{equation}

We then have the following characterization.

\begin{cor}\label{CMl}
The tangent cone of a numerical semigroup $S$ is Cohen-Macaulay if
and only if $l_e(S)=0$, where $e=m(S)$.
\end{cor}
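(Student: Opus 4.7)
The plan is to combine the characterization of Cohen--Macaulayness in Lemma \ref{CM}(4) with the reduction-number remark made just above the statement. By Lemma \ref{zero}(1), $\ord_S(e) = 1$, and by concatenating maximal expressions one always has $\ord_S(s+e) \geq \ord_S(s) + \ord_S(e) = \ord_S(s) + 1$. Hence every term in the maximum defining $l_e(S)$ is non-negative, and $l_e(S) = 0$ is equivalent to the equality $\ord_S(s+e) = \ord_S(s) + 1$ holding for every $s \in S$ with $\ord_S(s) \leq r$.

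Next, I would verify that restricting to $\ord_S(s) \leq r$ discards no information. Suppose $n := \ord_S(s) \geq r$; then $n+1 \geq r$, so the defining property of the reduction number gives $(n+2)M = e + (n+1)M$. If we had $\ord_S(s+e) \geq n+2$, i.e.\ $s+e \in (n+2)M$, we could write $s + e = e + s'$ with $s' \in (n+1)M$ and cancel $e$ to conclude $s = s' \in (n+1)M$, contradicting $\ord_S(s) = n$. Therefore $\ord_S(s+e) = n+1 = \ord_S(s) + 1$ automatically whenever $\ord_S(s) \geq r$.

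Stitching the two observations together, the condition $l_e(S) = 0$ is equivalent to $\ord_S(s+e) = \ord_S(s) + 1$ holding for all $s \in S$ (not only those of bounded order), and by Lemma \ref{CM}(4) this in turn is equivalent to $G(S)$ being Cohen--Macaulay. There is no real obstacle here: the only step beyond bookkeeping is the cancellation argument for $\ord_S(s) \geq r$, which is essentially a direct application of the definition of the reduction number.
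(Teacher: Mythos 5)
Your argument is correct and is essentially the paper's own: the paper proves this corollary by the discussion immediately preceding the definition of $l_x(S)$, namely that $\ord_S(s+e)=\ord_S(s)+1$ holds automatically once $\ord_S(s)\geq r$ (which you justify carefully via the cancellation $s+e=e+s'$), so that $l_e(S)=0$ is equivalent to condition (4) of Lemma \ref{CM}. No discrepancies to report.
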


\begin{rem}\label{lr}
Using the notation of Apéry table introduced in \cite{CZ1}, we
observe that $l_e(S)$ is indeed the maximum length of true landings
in the Apéry table of $S$. In particular $l_e(S)\leq r-1$.
\end{rem}

\begin{rem}\label{L}
Let $s\in S$ and assume that $\ord_S(s) > r$. Then $s=y+ke$ for some
$y\in S$ with  $\ord_S(y)=r$. Now, for an element $x\in S$, we have
$\ord_S(s+x)=\ord_S(y+ke+x)$ and since $\ord_S(y+x)\geq r$, we
obtain that
$$\ord_S(s+x)-\ord_S(x)-\ord_S(s)=\ord_S(x+y)+k-\ord_S(x)-\ord_S(y)-k\leq
l_x(S).$$
On the other hand, if $\ord_S(s)\leq r$ we have by definition that
$$\ord_S(s+x) -\ord_S(x)-\ord_S(s) \leq l_x(S).$$
Hence for elements $s_1,s_2\in S$, we have
\begin{equation}
\ord_S(s_1+s_2)\leq \ord_S(s_1)+\ord_S(s_2)+\min\{l_{s_1}(S) ,
l_{s_2}(S)\} .\end{equation}

Note also that to bound $l_x(S)$ we only need to consider the values of $l_s(S)$ for elements $s$
of $S$ with $\ord_S(s)\leq r$, that is
$$l_x(S) \leq \max \{ l_s(S); \ord_S(s)\leq r\}.$$
Hence it is natural to consider the finite number
\begin{equation}
L(S)=\max\{l_s(S) ; \ord_S(s)\leq r\}. \end{equation} Then
$\ord_S(s_1+s_2)\leq \ord_S(s_1)+\ord_S(s_2)+L(S)$ for all $s_1,
s_2\in S$.
\end{rem}

\begin{rem}\label{beta}
Assume that $x\in S$ and consider the zero-dimensional ring $\bar{R}=R/t^xR$. Then $\bar{R}$
is of length $\lambda(\bar{R})=\#\AP(S,x)$. Set $\bar{G}(S)$ the tangent cone of $\bar{R}$:
$\bar{G}(S)=\bigoplus_{i\geq 0}\bar{G}_i$, where
$\bar{G}_i=\frac{(\fm/t^x)^i}{(\fm/t^x)^{i+1}}$ and $\lambda(\bar{G}_i)=\#\{w\in\AP(S,x) ; \ord(w)=i\}$. In particular
$\bar{G}_i=0$ for all $i>\max\{\ord_S(w); w\in\AP(S,x)\}$. We will
use the following notations to refer to these numbers.
\begin{equation}
\beta_i(x):=\lambda(\bar{G}_i)=\#\{w\in\AP(S,x) ; \ord(w)=i\}
\end{equation}
\begin{equation}
d(x):=\max\{i\in\N; \bar{G}_i\neq 0\}=\max\{\ord_S(w);
w\in\AP(S,x)\}
\end{equation}
\end{rem}

According to the above remark, many properties of $G(S)$ which are
induced by the quotient ring $\bar{R}$ or the corresponding tangent
cone $\bar{G}(S)$, could be detected in terms of Apéry sets. On the
other hand, it is well known that the numerical semigroup ring $R$
is Gorenstein if and only if $S$ is symmetric (see the
introduction), which in terms of Apéry sets is equivalent to that
given a (any) positive integer $n\in S$, if
$\AP(S,n)=\{0<a_1<\cdots<a_{n-1}\}$ then $a_i+a_{n-i-1}=a_{n-1}$ for
all $i=0,\ldots,n-1$. This is also equivalent to say that
$\Max\AP(S,n)$ has only one element. Note that if $S$ is symmetric
then $\bar{R}=R/t^xR$ is also Gorenstein for any $x\in S$.

A numerical semigroup $S$ is called pure (resp. $M$--pure) if all elements in
$\Max \AP(S,e)$ (resp. $\Max_M\AP(S,e)$) have the same order, where $e=m(S)$ is the
multiplicity of $S$. It may be seen \cite[Proposition 3.4]{B} that $S$ is $M$-pure if and only if $S$ is pure and $\Max \AP(S,e) = \Max_M\AP(S,e)$. In particular $S$ is $M$--pure symmetric if and only if $\Max_M\AP(S,e)$ has only one element. By a result of  L. Bryant \cite[Theorem
3.14]{B}, we know that $G(S)$ is Gorenstein if and only if $G(S)$ is
Cohen-Macaulay and $S$ is $M$-pure symmetric.

The above notions can be naturally extended to the relative case: given a numerical semigroup $S$ and $x\in S$, we say that $S$ is pure (resp. $M$-pure) with respect to $x$ if all elements in $\Max \AP(S,x)$ (resp. $\Max_M\AP(S,x)$) have the same order. Then it is easy to see that $S$ is $M$-pure with respect to $x$ if and only if $S$ is pure with respect to $x$ and $\Max \AP(S,x) = \Max_M\AP(S,x)$. In particular $S$ is symmetric and $M$--pure with respect to $x$ if and only if $\Max_M\AP(S,x)$ has only one element.

The following lemma gives a criteria for the Gorenstein property of the tangent cone of
$R/t^xR$ for an arbitrary element $x\in S$.  It extends the result proved
by Bryant in \cite{B} when $x=e$.

\begin{lem}\label{MG}

Assume that $S$ is symmetric and $x\in S$. Let $\bar{R}=R/t^xR$. The following statements
are equivalent.
\begin{enumerate}
\item $\bar{G}(S)$ is Gorenstein
\item $\beta_i(x)=\beta_{d(x)-i}(x)$ for $0\leq i\leq
\lfloor\frac{d(x)+1}{2}\rfloor$.
\item $S$ is $M$-pure with respect to $x$.
\end{enumerate}
 \end{lem}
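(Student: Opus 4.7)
The plan is to compute the socle of $\bar{G}(S)$ explicitly in terms of $\AP(S,x)$ and translate the result. Since $R$ is Gorenstein (as $S$ is symmetric) and $t^x$ is a non--zero--divisor in the one--dimensional domain $R$, the quotient $\bar{R}$ is Artinian Gorenstein; because $\bar{G}(S)$ is zero--dimensional (hence automatically Cohen--Macaulay), it is Gorenstein if and only if its socle is one--dimensional over $k$.

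The key computational step is the multiplication rule in $\bar{G}(S)$: for $w \in \AP(S,x)$ and a minimal generator $m_l$ of $S$, the product $(t^w)^\ast(t^{m_l})^\ast$ equals $(t^{w+m_l})^\ast$ when $w+m_l \in \AP(S,x)$ and $\ord_S(w+m_l) = \ord_S(w)+1$, and is zero otherwise (either $t^{w+m_l} \in t^x R$ when $w+m_l\notin \AP(S,x)$, or the order strictly jumps so the class already lies in a higher graded piece). Using this together with the preliminary fact that $\preceq$-predecessors of elements of $\AP(S,x)$ remain in $\AP(S,x)$, I would prove the key claim: $(t^w)^\ast$ lies in the socle of $\bar{G}(S)$ if and only if $w \in \Max_M \AP(S,x)$. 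The nontrivial direction goes by contrapositive: given any $z \in S\setminus\{0\}$ witnessing $w \prec_M (w+z) \in \AP(S,x)$, the sub--additivity $\ord_S(a+b)\geq \ord_S(a)+\ord_S(b)$ forces a generator $m_l$ appearing in a maximal expression of $z$ to satisfy $w+m_l \in \AP(S,x)$ and $\ord_S(w+m_l) = \ord_S(w)+1$, hence $(t^w)^\ast(t^{m_l})^\ast \neq 0$.

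The socle is graded, and within each fixed degree multiplication by the $(t^{m_l})^\ast$ sends distinct basis vectors $(t^{w_j})^\ast$ to pairwise distinct nonzero basis vectors of the next graded piece, so no cancellation is possible among terms whose index $w_j$ is not $M$--maximal; the socle is therefore exactly the $k$--span of $\{(t^w)^\ast : w \in \Max_M \AP(S,x)\}$, of dimension $\#\Max_M \AP(S,x)$. This yields $(1)\Leftrightarrow(3)$ at once: as already recorded before the statement, the hypothesis that $S$ is symmetric forces $\Max \AP(S,x) = \{a_{x-1}\}$, and $S$ is $M$--pure with respect to $x$ precisely when $\Max_M \AP(S,x)$ coincides with $\Max \AP(S,x)$, i.e.\ is also the singleton $\{a_{x-1}\}$.

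For $(2)\Leftrightarrow(3)$ I would use the symmetric pairing $a_i + a_{x-1-i} = a_{x-1}$ on $\AP(S,x)$, which combined with sub--additivity of $\ord_S$ gives $b_i + b_{x-1-i} \leq d(x) = b_{x-1}$, where $b_i := \ord_S(a_i)$. The implication $(3)\Rightarrow(2)$ follows because $(3)$ forces equality in each such inequality, and the involution $a_i \leftrightarrow a_{x-1-i}$ then realizes a bijection between the order--$j$ and the order--$(d(x)-j)$ elements of $\AP(S,x)$. For $(2)\Rightarrow(3)$, the symmetric--Hilbert--function hypothesis says that the multiset $\{b_i\}$ is invariant under $j\mapsto d(x)-j$, so its sum equals that of $\{d(x)-b_{x-1-i}\}_i$; the pointwise inequality $b_i \leq d(x)-b_{x-1-i}$ then forces equality $b_i+b_{x-1-i}=d(x)$ for every $i$, i.e.\ $a_i \prec_M a_{x-1}$ for every $i<x-1$, which is $(3)$. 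The main delicate point will be the socle computation itself: the $M$--maximality notion must be shown to capture simultaneously both the elements that leave $\AP(S,x)$ upon adding a generator and those producing a strict order jump, and one must check that no nontrivial linear combination involving basis vectors $(t^{w_j})^\ast$ with $w_j \notin \Max_M \AP(S,x)$ can be annihilated by all the $(t^{m_l})^\ast$ simultaneously.
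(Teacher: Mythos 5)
Your proof is correct, but it reaches the key equivalence by a genuinely different route than the paper. The paper proves $(1)\Leftrightarrow(2)$ by invoking the general theorem of Ooishi (and Heinzer--Kim--Ulrich) that the associated graded ring of an Artinian Gorenstein local ring is Gorenstein if and only if its Hilbert function is symmetric, and then proves $(2)\Leftrightarrow(3)$ combinatorially via the sets $A_i=\{s\in\AP(S,x):\ord(s)\geq d(x)-i\}$ and $B_i=\{w-s:\ord(s)\leq i\}$, shown equal by an inclusion plus a cardinality count. You instead prove $(1)\Leftrightarrow(3)$ directly by computing the socle of the Artinian ring $\bar G(S)$ on its monomial basis $\{(t^w)^\ast: w\in\AP(S,x)\}$: your multiplication rule and the no--cancellation observation (distinct $w_j$ give distinct $w_j+m_l$, hence distinct basis vectors in the next graded piece) identify the socle as the span of the classes indexed by $\Max_M\AP(S,x)$, so $\bar G(S)$ is Gorenstein exactly when $\#\Max_M\AP(S,x)=1$, which under symmetry is $M$-purity with respect to $x$. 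This is self-contained, avoids the external citation, and in fact yields more (the Cohen--Macaulay type of $\bar G(S)$ equals $\#\Max_M\AP(S,x)$), at the price of being special to the monomial setting. Your $(2)\Leftrightarrow(3)$ is a slicker version of the paper's counting argument: you replace the set identity $A_i=B_i$ by comparing $\sum_i\ord(a_i)$ with $\sum_i\bigl(d(x)-\ord(a_{x-1-i})\bigr)$ and letting the pointwise inequality $\ord(a_i)+\ord(a_{x-1-i})\leq d(x)$ force equality everywhere. When writing it up, do record explicitly that $d(x)=\ord(a_{x-1})$ (immediate from $a_i\preceq a_{x-1}$ for all $i$) and that condition (2), stated only for $0\leq i\leq\lfloor (d(x)+1)/2\rfloor$, extends to all $i$ before you sum; both are one-line checks and neither causes any problem.
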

\begin{proof}
First we show that (1) and (2) are equivalent. Let
$\bar{G}(S)=\oplus_{i=0}^{d(x)}\bar{G}_i$, where
$\bar{G}_i=\frac{(\fm/t^x)^i}{(\fm/t^x)^{i+1}}$. By \cite[Theorem 1.5]{O} (see also \cite[Theorem
3.1]{HKU}), $\bar{G}(S)$ is Gorenstein if and only if
$\lambda(\bar{G}_i)=\lambda(\bar{G}_{d(x)-i})$ for $0\leq i\leq
\lfloor\frac{d(x)+1}{2}\rfloor$. Now the result follows by Remark
\ref{beta}.

Now, we prove the equivalency of (2) and (3) by using the ideas
given in the proof of \cite[Proposition 3.8]{B}. Note that since $S$ is symmetric by hypothesis, $S$ is $M$-pure with respect to $x$ if and only if $\Max_M\AP(S,x)$ has only one element. Moreover, $\Max\AP(S,x)$ has a unique element say $w$ and so $\#\Max_M\AP(S,x)=1$ if and only if
$\Max_M\AP(S,x)=\Max\AP(S,x)=\{w\}$.

Assume that (3) holds and let $s\in\AP(S,x)$. Then $s\preccurlyeq_M w$,
that is  $s+y=w$ for some $y\in\AP(S,x)$ and
$\ord(s)+\ord(y)=\ord(w)=d(x)$. Hence the number of elements of
order $i$ in $\AP(S,x)$ is equal to the number of elements of order
$d(x)-i$, and so (2) follows.

Now assume that $\beta_i(x)=\beta_{d(x)-i}(x)$ for $0\leq i\leq
\lfloor\frac{d(x)+1}{2}\rfloor$. Let $$A_i=\{s\in\AP(S,x) ;
\ord(s)\geq d(x)-i\}$$ and $$B_i=\{w-s ; s\in\AP(S,x) \mbox{ and
} \ord(s)\leq i\}.$$ If $y\in A_i$, then $y\in\AP(S,x)$ and
$\ord(y)\geq d(x)-i$. On the other hand $y+z=w$ for some
$z\in\AP(S,x)$. Now,
 $d(x)=\ord(w)\geq\ord(y)+\ord(z)\geq d(x)-i+\ord(z)$. Hence $\ord(w-y)=\ord(z)\leq i$ and so $y\in B_i$. Note that
 $\#A_i=\sum_{j=0}^i\beta_{d(x)-j}=\sum_{j=0}^i\beta_j=\#B_i$. Thus $A_i=B_i$. Let $s\in\AP(S,x)$: then $s+n=w$ for some
 $n\in\AP(S,x)$. Note that
 $d(x)=\ord(w)\geq \ord(s)+\ord(n)$ and so
 $n\in B_{\ord(s)}=A_{\ord(s)}$. Thus $\ord(n)\geq d(x)-\ord(s)$.
 Hence $\ord(s)+\ord(n)=\ord(w)$, that is $s\preccurlyeq_Mw$. So that
 $\Max_M\AP(S,x)=\{w\}$ and we are done.
\end{proof}

Now, we are able to give an equivalent condition for the $M$--pure
symmetric property which will play an important role in our approach to study this property for monomial curves obtained by gluing. It gives some flexibility in order to test the Gorenstein property of the tangent cone.

\begin{prop}\label{PS}
Let $S$ be a numerical semigroup with multiplicity $e$. If
$G(S)$ is Cohen-Macaulay, then $G(S)$ is Gorenstein if and only if $S$ is symmetric and any of the following equivalent statements holds:

\begin{enumerate}
\item $S$ is $M$-pure.
\item $\beta_i(ke)=\beta_{d(ke)-i}$ for some $k>0$ and  $0\leq i\leq
\lfloor\frac{d(ke)+1}{2}\rfloor$.
\item $\beta_i(ke)=\beta_{d(ke)-i}$ for all  $k>0$  and $0\leq i\leq
\lfloor\frac{d(ke)+1}{2}\rfloor$.
\item %$\#\emph\Max_M\emph\AP(S,ke)=1$ for all $k>0$.%
$S$ is $M$-pure with respect to $ke$ for all $k>0$.
\item %$\#\emph\Max_M\emph\AP(S,ke)=1$ for some $k>0$.%
$S$ is $M$-pure with respect to $ke$ for some $k>0$.
\end{enumerate}
\end{prop}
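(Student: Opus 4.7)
The plan is to reduce the proposition to Bryant's theorem and to a structural description of $\AP(S, ke)$ under the Cohen--Macaulay hypothesis. By \cite[Theorem 3.14]{B}, $G(S)$ is Gorenstein if and only if $G(S)$ is Cohen--Macaulay and $S$ is $M$-pure symmetric; combined with our hypothesis that $G(S)$ is Cohen--Macaulay, this immediately yields the equivalence of ``$G(S)$ Gorenstein'' with ``$S$ symmetric together with $(1)$''. The remaining work is to prove the five equivalences $(1) \Leftrightarrow \cdots \Leftrightarrow (5)$, with the symmetry assumption carried along.

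Several of these are essentially formal. We have $(3) \Rightarrow (2)$ and $(4) \Rightarrow (1) \Rightarrow (5)$ trivially by specializing to $k = 1$. Assuming $S$ symmetric, Lemma \ref{MG} applied with $x = ke$ supplies $(5) \Leftrightarrow (2)$ and $(4) \Leftrightarrow (3)$. The core task therefore reduces to proving $(1) \Leftrightarrow (5)$ for an arbitrary $k > 0$.

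For this I would first establish the decomposition
\[
\AP(S, ke) = \bigl\{\, w + je \,:\, w \in \AP(S, e),\ 0 \leq j \leq k-1 \,\bigr\}
\]
as a disjoint union of $ke$ congruence classes modulo $ke$. Minimality is a direct check: if some smaller $s \in S$ were congruent to $w + je$ modulo $ke$, then $s + (k-j)e \in S$ would be congruent to $w$ modulo $e$ and strictly smaller, contradicting $w \in \AP(S, e)$. Since $G(S)$ is Cohen--Macaulay, Lemma \ref{CM} then gives the crucial additivity $\ord_S(w + je) = \ord_S(w) + j$ on this set.

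The key step is to identify $\Max_M \AP(S, ke)$ with $\{\, w + (k-1)e : w \in \Max_M \AP(S, e)\,\}$. For $j < k-1$ one has $w + je \preceq_M w + (j+1)e$ inside $\AP(S, ke)$, so such an element cannot be $M$-maximal. Conversely, suppose $w + (k-1)e \preceq_M v$ with $v = w' + j'e \in \AP(S, ke)$; writing $v = w + (k-1)e + z$ yields $w' = w + (k-1-j')e + z$, and $j' < k-1$ would force $w' - e \in S$, contradicting $w' \in \AP(S, e)$. Hence $j' = k-1$ and the comparison reduces to $w \preceq_M w'$ in $\AP(S, e)$. The orders attained in $\Max_M \AP(S, ke)$ are therefore exactly $\{\ord_S(w) + k - 1 : w \in \Max_M \AP(S, e)\}$, so $M$-purity with respect to $ke$ is equivalent to $M$-purity with respect to $e$, establishing $(1) \Leftrightarrow (5)$. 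The main obstacle is precisely this identification of $\Max_M \AP(S, ke)$, where Cohen--Macaulayness is used essentially through the additivity of the order function along translations by $e$.
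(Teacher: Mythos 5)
Your argument is correct, but it takes a genuinely different route through the core equivalences than the paper does. The paper never analyzes the sets $\AP(S,ke)$ directly: it uses the hypothesis that $(t^{ke})^*$ is a non--zero--divisor to identify the Gorenstein property of $G(S)$ with that of each Artinian reduction $\bar{G}(S)$ for $\bar{R}=R/t^{ke}R$, and then closes the cycle $(1)\Rightarrow(2)\Rightarrow(3)\Rightarrow(4)\Rightarrow(5)\Rightarrow(1)$ by repeatedly invoking Lemma \ref{MG}, so that every condition is routed through the single statement ``$G(S)$ is Gorenstein''. You instead prove the equivalence of $M$-purity with respect to $e$ and with respect to $ke$ by hand, via the decomposition $\AP(S,ke)=\{w+je :\ w\in\AP(S,e),\ 0\le j\le k-1\}$ and the identification $\Max_M\AP(S,ke)=\{w+(k-1)e :\ w\in\Max_M\AP(S,e)\}$ with a uniform order shift of $k-1$; Lemma \ref{MG} is then needed only to translate between the purity conditions and the $\beta$-conditions. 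Both proofs use Cohen--Macaulayness in the same essential way (additivity of $\ord_S$ under translation by $e$, Lemma \ref{CM}), but yours is more combinatorial and yields the extra structural description of $\AP(S,ke)$ and its $\preceq_M$-maximal elements as a byproduct, while the paper's is shorter given that Lemma \ref{MG} is already in place. One small slip worth fixing: in your minimality check, if $s\in S$ is congruent to $w+je$ modulo $ke$ and smaller, say $s=w+je-\alpha ke$ with $\alpha\ge 1$, then $s+(k-j)e$ equals $w+(1-\alpha)ke$, which is \emph{not} strictly smaller than $w$ when $\alpha=1$; the contradiction should instead be drawn from $w-e=s+(\alpha k-j-1)e\in S$, which violates $w\in\AP(S,e)$. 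This is cosmetic and does not affect the validity of the argument.
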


\begin{proof}
First note that since $G(S)$ is Cohen-Macaulay, then $(t^{ke})^* \in G(S)$ is a non-zero-divisor for any $k>0$. In particular, $G(S)$ is Gorenstein if and only if $\bar{G}(S)$ is Gorenstein, for $\bar{R} = R/t^{e}R$. Hence by \cite[Proposition 3.14(1)]{B} $G(S)$ is Gorenstein if and only if $S$ is $M$-pure symmetric. Now, it suffices to observe that if $S$ is symmetric, conditions (1) to (5) are equivalent.

(1) $\Rightarrow$ (2) follows by \cite[Proposition 3.8]{B},
considering $k=1$.

(2) $\Rightarrow$ (3)  By Lemma \ref{MG}, $\bar{G}(S)$ is Gorenstein, for $\bar{R} = R/t^{ke}R$. Hence $G(S)$ is also Gorenstein and so $\bar{G}(S)$ is Gorenstein, for $\bar{R} = R/t^{ke}R$ and for all $k>0$. Again by Lemma \ref{MG} we get (3).

(3) $\Rightarrow$ (4) is obvious by Lemma \ref{MG}.

(4) $\Rightarrow$ (5) follows by a similar argument as in (2) $\Rightarrow$ (3).

(5) $\Rightarrow$ (1) By definition for $k=1$.

\end{proof}

\section{Tangent Cones of monomial curves obtained by gluing}

Throughout this section  $S_1$ and $S_2$  are two numerical semigroups
minimally generated by $m_1<\cdots<m_d$
 and $n_1<\cdots<n_k$ respectively.  First we recall the concepts of gluing and nice gluing of two numerical semigroups
as defined in \cite{R} and \cite{AMS} and respectively.

\begin{defn}\label{G}   Let $p\in S_1$ and $q\in S_2$ be two positive integers
 satisfying $\gcd(p,q)=1$ with $p\notin\{m_1,\ldots,m_d\}$ and
 $q\notin\{n_1,\ldots,n_k\}$. The numerical semigroup
 $$S=<qm_1,\ldots,qm_d,pn_1,\ldots,pn_k>$$ is called a \emph{gluing} of $S_1$ and $S_2$. We call $S$ a nice gluing of $S_1$ and
 $S_2$ if $q=an_1$ for some $1<a\leq \ord_{S_1}(p)$.
\end{defn}

\begin{rem}\label{m}
It is easy to see from the definition that the set
$$\{qm_1,\ldots,qm_d,pn_1,\ldots,pn_k\}$$ is a minimal system of
generators of $S$ and $m(S)=\min\{qm_1,pn_1\}$. If $S$ is a nice
gluing of $S_1$ and $S_2$, then $qm_1=an_1m_1\leq
\ord_{S_1}(p)n_1m_1\leq pn_1$ and so in this case, $m(S)=qm_1$.
\end{rem}

The following lemma states an easy but useful property about the representations of the elements of a semigroup obtained by
gluing.

\begin{lem}\label{ex}
Let $S=<qm_1,\ldots,qm_d,pn_1,\ldots,pn_k>$ be a gluing of $S_1$ and
$S_2$. Let $u=qx+py\in S$, where $x\in S_1$ and $y\in S_2$. Then
\begin{enumerate}
\item $\emph\ord_S(u)\geq \emph\ord_{S_1}(x)+\emph\ord_{S_2}(y)$.
\item There exist elements $z_1\in S_1$ and $z_2\in S_2$ such that
$u=qz_1+pz_2$ and $\emph\ord_S(u)=\emph\ord_{S_1}(z_1)+\emph\ord_{S_2}(z_2)$.
\end{enumerate}
\end{lem}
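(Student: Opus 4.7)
The plan is to unpack both parts from maximal expressions, exploiting the fact that the minimal generating set of $S$ is a disjoint union of scaled generators of $S_1$ and $S_2$.

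For part (1), I would start with maximal expressions $x=\sum_{i=1}^d r_i m_i$ in $S_1$ and $y=\sum_{j=1}^k s_j n_j$ in $S_2$, so that $\sum r_i = \ord_{S_1}(x)$ and $\sum s_j = \ord_{S_2}(y)$. Substituting gives
\[
u = qx+py = \sum_{i=1}^d r_i (qm_i) + \sum_{j=1}^k s_j(pn_j),
\]
which is a representation of $u$ as a non-negative combination of the minimal generators of $S$ of total length $\ord_{S_1}(x)+\ord_{S_2}(y)$. Since $\ord_S(u)$ is by definition the maximum such length, (1) follows.

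For part (2), I would reverse the process. Pick a maximal expression of $u$ in $S$:
\[
u = \sum_{i=1}^d a_i (qm_i) + \sum_{j=1}^k b_j (pn_j), \qquad \sum a_i + \sum b_j = \ord_S(u).
\]
Set $z_1 := \sum a_i m_i \in S_1$ and $z_2 := \sum b_j n_j \in S_2$. By construction $u = qz_1 + pz_2$, and from the definition of order we have $\ord_{S_1}(z_1) \geq \sum a_i$ and $\ord_{S_2}(z_2) \geq \sum b_j$, whence $\ord_{S_1}(z_1)+\ord_{S_2}(z_2) \geq \ord_S(u)$. On the other hand, applying part (1) to the pair $(z_1,z_2)$ gives $\ord_S(u)\geq \ord_{S_1}(z_1)+\ord_{S_2}(z_2)$, and the two inequalities combine to the desired equality.

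I do not see a genuine obstacle here; the content of the lemma is essentially that the minimal generating set of $S$ splits cleanly into the two ``pieces'' $\{qm_i\}$ and $\{pn_j\}$, so orders behave additively. The only point that needs a brief mention is that the $z_1,z_2$ produced in (2) need not coincide with the $x,y$ originally given (since the decomposition $u = q(\cdot)+p(\cdot)$ is not unique when $\gcd(p,q)=1$), but this is exactly why the statement distinguishes between the inequality in (1) and the existence claim in (2).
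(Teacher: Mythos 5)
Your proposal is correct and takes essentially the same route as the paper: part (1) by substituting maximal expressions of $x$ and $y$ into $u$, and part (2) by splitting a maximal expression of $u$ in $S$ into its $\{qm_i\}$- and $\{pn_j\}$-parts. The only cosmetic difference is that you justify the equality in (2) by sandwiching between the two inequalities $\ord_{S_1}(z_1)+\ord_{S_2}(z_2)\geq \ord_S(u)$ and the reverse one from part (1), whereas the paper simply observes that the split expressions are again maximal expressions of $z_1$ and $z_2$; the underlying argument is identical.
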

\begin{proof}
(1)  Let  $d_1=\ord_{S_1}(x)$ and $d_2=\ord_{S_2}(y)$. Consider
maximal expressions $x=\sum_{i=1}^dr_im_i$ and
$y=\sum_{j=1}^ks_jn_j$ with $\sum_{i=1}^dr_i=d_1$ and
$\sum_{j=1}^ks_j=d_2$. Hence we may write
$u=\sum^d_{i=1}r_iqm_i+\sum_{j=1}^ks_jpn_j$ and so $\ord_S(u)\geq
d_1+d_2$.

(2) Let $u=\sum^{d}_{i=1}r_iqm_i+\sum^k_{j=1}s_jpn_j$ be a maximal
expression of $u$, that is $ord_S(u)=\sum^d_{i=1}r_i+\sum^k_{j=1}s_j$. Note
that $z_1=\sum^d_{i=1}r_im_i$ and $z_2=\sum^k_{j=1}s_jn_j$ are also
maximal expressions. So that $u=qz_1+pz_2$ with
$\ord_S(u)=\ord_{S_1}(z_1)+\ord_{S_2}(z_2)$.
\end{proof}

In studying the Cohen-Macaulay property of the tangent cone of a numerical semigroup obtained by gluing the following simple characterization of a nice gluing will be a crucial point.

\begin{prop}\label{nice}
The following statements are equivalent for a numerical semigroup
$S=<qm_1,\ldots,qm_d,pn_1,\ldots,pn_k>$ which is a gluing of $S_1$
and $S_2$.
\begin{enumerate}
\item  $S$ is a nice gluing of $S_1$ and $S_2$, and $G(S_2)$ is
Cohen-Macaulay.
\item  $\emph\ord_{S_2}(q)\leq \emph\ord_{S_1}(p)$ and $(t^q)^*$ is a non--zero--divisor of $G(S_2)$.
\item $\emph\ord_{S_2}(q)\leq \emph\ord_{S_1}(p)$ and
$\emph\ord_{S_2}(y+\alpha q)=\emph\ord(y)+\alpha\emph\ord(q)$ for all $y\in S_2$
and $\alpha\geq 0$.
\end{enumerate}
\end{prop}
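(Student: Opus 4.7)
The proof splits naturally into two parts: the equivalence (2) $\Leftrightarrow$ (3), which is essentially a restatement of how multiplication works in $G(S_2)$, and the equivalence (1) $\Leftrightarrow$ (2), which is the arithmetic content of the proposition and where Lemma \ref{zero} does all the work.

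I would first dispose of (2) $\Leftrightarrow$ (3). By definition $(t^q)^\ast \cdot (t^y)^\ast = 0$ in $G(S_2)$ precisely when $\ord_{S_2}(y+q) > \ord_{S_2}(y) + \ord_{S_2}(q)$, so $(t^q)^\ast$ being a non--zero--divisor on all $(t^y)^\ast$ with $y \in S_2$ is the case $\alpha = 1$ of (3), and an easy induction on $\alpha$ delivers the general statement. (Equivalently: the fact that $(t^q)^\ast$ is a non--zero--divisor on $G(S_2)$ is the same as saying it is a non--zero--divisor on the set $\{(t^y)^\ast : y \in S_2\}$, which spans $G(S_2)$ as a $k$--vector space.)

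For (1) $\Rightarrow$ (2), assume $q = an_1$ with $1 < a \leq \ord_{S_1}(p)$ and $G(S_2)$ Cohen--Macaulay. Since $n_1 = m(S_2)$, Lemma \ref{zero}(1) gives $\ord_{S_2}(q) = \ord_{S_2}(an_1) = a$, so the inequality $\ord_{S_2}(q) \leq \ord_{S_1}(p)$ is immediate. Moreover, writing $q = an_1$ as the maximal expression and observing $\ord_{S_2}(n_1) = 1$, we get $(t^q)^\ast = \bigl((t^{n_1})^\ast\bigr)^a$ in $G(S_2)$. The Cohen--Macaulay hypothesis on $G(S_2)$ means $(t^{n_1})^\ast$ is a non--zero--divisor (Lemma \ref{CM}), hence so is any power of it, and (2) follows.

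For the converse (2) $\Rightarrow$ (1), the idea is to reverse the same chain. Since $(t^q)^\ast$ is a non--zero--divisor of $G(S_2)$, in particular it is a non--zero--divisor on all $(t^{jq})^\ast$ with $1 \leq j \leq n_1 - 1$, so Lemma \ref{zero}(2) applied to $S_2$ forces $q = a n_1$ for some $a \in \N$. By the very definition of gluing $q \notin \{n_1, \ldots, n_k\}$, so $a \neq 1$ and $a \geq 2$. Lemma \ref{zero}(1) then gives $\ord_{S_2}(q) = a$, and the hypothesis $\ord_{S_2}(q) \leq \ord_{S_1}(p)$ yields $a \leq \ord_{S_1}(p)$, certifying the nice gluing condition. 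Finally, again $(t^q)^\ast = \bigl((t^{n_1})^\ast\bigr)^a$; the elementary fact that a power of a homogeneous element is a non--zero--divisor only if the element itself is, yields that $(t^{n_1})^\ast$ is a non--zero--divisor, and so $G(S_2)$ is Cohen--Macaulay by Lemma \ref{CM}.

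The only point requiring a bit of attention is verifying that $(t^q)^\ast$ really coincides with $\bigl((t^{n_1})^\ast\bigr)^a$; this rests precisely on the equality $\ord_{S_2}(an_1) = a \cdot \ord_{S_2}(n_1)$ guaranteed by Lemma \ref{zero}(1), without which the two initial forms might differ. Once this identification is in place, everything else is a direct reading of Lemmas \ref{zero} and \ref{CM}, so no substantial obstacle is expected.
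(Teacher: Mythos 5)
Your argument is correct and rests on exactly the same toolkit as the paper's proof: Lemma \ref{zero}, Lemma \ref{CM}, and the identity $(t^q)^{\ast}=\bigl((t^{n_1})^{\ast}\bigr)^{a}$, which you rightly observe depends on $\ord_{S_2}(an_1)=a\cdot\ord_{S_2}(n_1)$ from Lemma \ref{zero}(1). The difference is the logical organization. The paper proves the cycle $(1)\Rightarrow(2)\Rightarrow(3)\Rightarrow(1)$, so it only ever needs the trivial direction ``non--zero--divisor of $G(S_2)$ implies non--zero--divisor on the monomial initial forms''; the return trip from the order condition to the Cohen--Macaulay property is delegated to Lemma \ref{CM}, i.e.\ to Garc\'{\i}a's theorem. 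By splitting the statement into $(2)\Leftrightarrow(3)$ and $(1)\Leftrightarrow(2)$ you take on the direct implication $(3)\Rightarrow(2)$, and there the justification you offer --- that the elements $(t^y)^{\ast}$ span $G(S_2)$ --- is not by itself sufficient: an element can be a non--zero--divisor on every member of a spanning set and still annihilate a linear combination. What actually saves the step is the special structure of $G(S_2)$: in each degree the nonzero $(t^y)^{\ast}$ form a $k$-basis, and multiplication by $(t^q)^{\ast}$ sends distinct basis monomials either to zero or to \emph{distinct} basis monomials, so injectivity on the monomials does force injectivity on each graded piece. With that one sentence supplied your proof is complete, and it is marginally more self-contained than the paper's (it does not need to route $(3)\Rightarrow(2)$ through Garc\'{\i}a's theorem). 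Everything else --- using Lemma \ref{zero}(2) to force $q=an_1$, the observation that $a>1$ because $q\notin\{n_1,\ldots,n_k\}$, and the ``power of a non--zero--divisor'' arguments in both directions between $(1)$ and $(2)$ --- matches the paper's proof step for step.
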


\begin{proof}

(1)$\Rightarrow$(2) By Definition \ref{G}, $\ord_{S_2}(q)\leq \ord_{S_1}(p)$ and $q=an_1$ for some $a>1$. Since $G(S_2)$ is
Cohen-Macaulay, $(t^{n_1})^*$ is a non-zero-divisor of $G(S_2)$ and so $(t^q)^*$ as well.

(2)$\Rightarrow$(3) is obvious, because if $(t^q)^*$ is a non-zero-divisor of $G(S_2)$, $(t^{\alpha q})*$ is also a non-zero-divisor of $G(S_2)$ for any $\alpha \geq 0$, and so it holds the condition on the orders in (3).

(3)$\Rightarrow$ (1) By taking $\alpha =1$, the condition on the orders in (3) means that $(t^q)^*$ is non-zero-divisor over the set of elements of the form $(t^s)^* \in G(S_2)$. Hence by Lemma \ref{zero}, $q=an_1$ for some $a>1$ and so $(t^{n_1})^*$ is also a non-zero-divisor over the set of elements of the form $(t^s)^* \in G(S_2)$. By Lemma \ref{CM}, $G(S_2)$ is then Cohen-Macaulay. On the other hand, since $\ord_{S_2}(q)=a$ we have that $a\leq\ord_{S_1}(p)$ and so $S$ is a nice gluing of $S_1$ and $S_2$.

\end{proof}

Observe that if $G(S_2)$ is Cohen-Macaulay then $l_q(S_2) = 0$ and
so condition (1) in the above proposition says in particular that
$q=an_1$ and $\ord_{S_2}(q) + l_q(S_2) \leq \ord_{S_1}(p)$. As a
general case of  this property we single out the following
definition.

\begin{defn}\label{sg}
Let $S=<qm_1,\ldots,qm_d,pn_1,\ldots,pn_k>$ be a gluing of $S_1$ and
$S_2$. We call $S$ a \emph{specific gluing} of $S_1$ and $S_2$,  if $\ord_{S_2}(q)+l_q(S_2)\leq \ord_{S_1}(p)$.
\end{defn}

\begin{rem}\label{fs}
Let $S_1$ and $S_2$ be numerical semigroups. By the definition it is clear that for a given $q$, all
possible  gluings  of $S_1$ and $S_2$ with $q$ are specific, except finitely
many of them.
\end{rem}

\begin{rem}\label{ns}
Let  $S=<qm_1,\ldots,qm_d,pn_1,\ldots,pn_k>$ be a  gluing  of $S_1$
and $S_2$.  If $S$ is a nice gluing of $S_1$ and $S_2$ and $G(S_2)$ is Cohen-Macaulay, then $S$ is a
specific gluing of $S_1$ and $S_2$.
\end{rem}

Now, we describe some Apéry sets of a numerical semigroup $S$
obtained by gluing of $S_1$ and $S_2$, in terms of Apéry sets of
$S_1$ and $S_2$.

\begin{prop}\label{AP}
Let $S=<qm_1,\ldots,qm_d,pn_1,\ldots,pn_k>$ be a gluing of $S_1$ and
$S_2$. The following statements hold for all $x\in S_1$.
\begin{enumerate}
\item $\emph\AP(S,qx)=\{qz_1+pz_2 ; z_1\in\emph\AP(S_1,x), z_2\in\emph\AP(S_2,q)\}$.
%\item $\emph\AP(S,py)=\{qz_1+pz_2 ; z_1\in\emph\AP(S_1,p), z_2\in\emph\AP(S_2,y)\}$.
\item If $qz_1+pz_2\in\emph\AP(S,qx)$, then $z_1\in\emph\AP(S_1,x)$.
\end{enumerate}
\end{prop}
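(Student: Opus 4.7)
The plan is to prove (2) first (it will feed directly into the $\subseteq$ direction of (1)) and then to handle (1) in two steps. For (2), I would argue by contradiction: if $qz_1+pz_2\in\AP(S,qx)$ but $z_1\notin\AP(S_1,x)$, then $z_1-x\in S_1$, so
$$qz_1+pz_2-qx = q(z_1-x)+pz_2 \in S,$$
contradicting membership in the Apéry set. This is essentially immediate.

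For the inclusion $\supseteq$ in (1), take $z_1\in\AP(S_1,x)$ and $z_2\in\AP(S_2,q)$ and suppose for contradiction that $qz_1+pz_2-qx=qu+pv$ for some $u\in S_1$, $v\in S_2$. Then $q(z_1-x-u)=p(v-z_2)$, and since $\gcd(p,q)=1$ (one of the defining conditions of a gluing) there exists an integer $\alpha$ with $z_1-x-u=p\alpha$ and $v-z_2=q\alpha$. A sign analysis finishes the job: if $\alpha\geq 0$ then $z_1-x=u+p\alpha\in S_1$ (using $p\in S_1$), contradicting $z_1\in\AP(S_1,x)$; if $\alpha=-\beta$ with $\beta\geq 1$, then $v=z_2-q\beta\in S_2$, so $z_2-q=v+q(\beta-1)\in S_2$, contradicting $z_2\in\AP(S_2,q)$.

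For the inclusion $\subseteq$ in (1), take any $w\in\AP(S,qx)$ and, using that $S$ is generated by $qm_1,\ldots,qm_d,pn_1,\ldots,pn_k$, write $w=qa+pb$ with $a\in S_1$ and $b\in S_2$. The representation is not unique, so I would first normalize: decompose $b=z_2+q\rho$ with $z_2\in\AP(S_2,q)$ and $\rho\geq 0$, and then rewrite
$$w = qa+p(z_2+q\rho) = q(a+p\rho)+pz_2,$$
noting that $z_1:=a+p\rho\in S_1$ because $p\in S_1$. Now $w=qz_1+pz_2$ with $z_2\in\AP(S_2,q)$, and applying (2) gives $z_1\in\AP(S_1,x)$, which is exactly the desired form.

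The main obstacle is the $\supseteq$ direction of (1): one has to exploit the coprimality of $p$ and $q$ to reduce two seemingly independent containments (in $S_1$ and in $S_2$) to a single integer parameter $\alpha$, and then check that both signs of $\alpha$ are incompatible with the Apéry hypotheses on $z_1$ and $z_2$ separately. The normalization trick in the $\subseteq$ direction, which relies crucially on $p\in S_1$ to absorb $q\rho$ into the $S_1$-part, is the other technical point but is much more routine once it is spotted.
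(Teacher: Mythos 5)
Your proof is correct. The $\supseteq$ inclusion in (1) is essentially the paper's argument: both reduce the two containments to a single integer parameter $\alpha$ via $\gcd(p,q)=1$ and rule out the two signs using $z_1\in\AP(S_1,x)$ together with $p\in S_1$ on one side, and $z_2\in\AP(S_2,q)$ together with $q\in S_2$ on the other. Where you genuinely diverge is in the other half. The paper proves $\subseteq$ by counting: it shows that $(z_1,z_2)\mapsto qz_1+pz_2$ is injective on $\AP(S_1,x)\times\AP(S_2,q)$ (again by coprimality), so the right-hand set has $x\cdot q=qx$ elements, which is exactly $\#\AP(S,qx)$; statement (2) is then deduced \emph{from} (1) by a congruence argument. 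You instead prove (2) first by a one-line contrapositive ($z_1-x\in S_1$ forces $q(z_1-x)+pz_2\in S$) and get $\subseteq$ constructively, normalizing an arbitrary representation $w=qa+pb$ so that the $S_2$-component lies in $\AP(S_2,q)$ and absorbing the leftover $pq\rho$ into the $S_1$-part (this is where $p\in S_1$ is essential), then invoking (2). Your route is more elementary and self-contained --- it never uses the fact that $\#\AP(S,qx)=qx$ --- and your proof of (2) is more direct than the paper's; the paper's counting argument has the side benefit of also establishing uniqueness of the representation $qz_1+pz_2$ with $z_1\in\AP(S_1,x)$ and $z_2\in\AP(S_2,q)$, a fact it reuses later (for instance in the proof of Corollary \ref{symmetric}). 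If you wanted that uniqueness as well, your normalization plus one more application of coprimality would recover it.
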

\begin{proof}
%Note that (1) follows (2), replacing $S_2$  by $S_1$ in  Definition
%\ref{G}.

(1) Assume that $z_1\in\AP(S_1,x)$ and $z_2\in\AP(S_2,q)$. If
$u=qz_1+pz_2\notin\AP(S,qx)$, then $u-qx=qz_1+pz_2-qx=qs_1+ps_2$ for
some $s_1\in S_1$, $s_2\in S_2$. Therefore, $qz_1+pz_2=q(s_1+x)+ps_2$.
Note that $\gcd(p,q)=1$ and $z_1\in\AP(S_1,x)$, so that
$s_1+x=z_1+\alpha p$ for some $\alpha\geq 0$. Hence $pz_2=q\alpha
p+ps_2$ and so $z_2=q\alpha +s_2$. But $z_2\in\AP(S_2,q)$, so that
$\alpha =0$ which implies that $z_1=s_1+x$ and contradicts the
assumption that $z_1\in\AP(S_1,x)$.

We have showed that $\Omega=\{qz_1+pz_2 ; z_1\in\AP(S_1,x),
z_2\in\AP(S_2,q)\}$ is a subset of $\AP(S,qx)$. Now, to prove  the
equality, it is enough to show that $\Omega$ has exactly $qx$
elements.  If $qz_1+pz_2=qs_1+ps_2$ for some $z_1, s_1\in\AP(S_1,x)$
and $z_2, s_2\in\AP(S_2,q)$, then $z_2$ and $s_2$ are congruent modulo
$q$, since $\gcd(p,q)=1$. On the other hand $z_2,s_2\in\AP(S_2,q)$, so
that $z_2=s_2$ and then $z_1=s_1$. Hence
$\#\Omega=\#\AP(S_2,q)\times\#\AP(S_1,x)=qx$.

(2) By part (1), there exist $s_1\in\AP(S_1,x)$ and
$s_2\in\AP(S_2,q)$ such that $qs_1+ps_2=qz_1+pz_2$. Since
$\gcd(p,q)=1$, we have that $z_2$ and $s_2$ are congruent modulo $q$. So that
$z_2=s_2+\alpha q$ for some $\alpha\geq 0$. Hence $qz_1+p\alpha
q=qs_1$ and so $s_1=p\alpha +z_1$. But now it is clear that
$z_1\in\AP(S_1,x)$, because $s_1\in\AP(S_1,x)$.

\end{proof}

Next consequence of the above proposition will be useful in order to determine the Gorenstein property of an specific gluing.

\begin{cor}\label{max}
Let $S=<qm_1,\ldots,qm_d,pn_1,\ldots,pn_k>$ be a gluing of $S_1$ and
$S_2$. Let $x, z_1\in S_1$ and $z_2\in S_2$. The following hold:
\begin{enumerate}
\item If $u=qz_1+pz_2\in\emph\Max\emph\AP(S,qx)$ and $z_2\in \emph\AP(S_2,q)$, then $z_1\in\emph\Max\emph\AP(S_1,x)$ and $z_2\in\emph\Max\emph\AP(S_2,q)$.
\item If $z_1\in\emph{\Max}\emph\AP(S_1,x)$ and $z_2\in\emph{\Max}\emph\AP(S_2,q)$, and there  is only one element in $\emph\Max\emph\AP(S_1,x)$, or only one element in $\emph\Max\emph\AP(S_2,q)$, then $u=qz_1+pz_2\in\emph\Max\emph\AP(S,qx)$.
\end{enumerate}
\end{cor}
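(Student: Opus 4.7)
The plan is to exploit the unique decomposition of elements of $\AP(S,qx)$ provided by Proposition \ref{AP}, treating the two parts separately.

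For part (1), Proposition \ref{AP}(2) immediately gives $z_1\in\AP(S_1,x)$. To upgrade this to $z_1\in\Max\AP(S_1,x)$ I would argue by contradiction: if some $z_1'\in\AP(S_1,x)$ strictly dominated $z_1$, say $z_1'=z_1+a$ with $a\in S_1\setminus\{0\}$, then by Proposition \ref{AP}(1) (using the hypothesis $z_2\in\AP(S_2,q)$) the element $u':=qz_1'+pz_2=u+qa$ would lie in $\AP(S,qx)$ and strictly dominate $u$, contradicting $u\in\Max\AP(S,qx)$. The conclusion $z_2\in\Max\AP(S_2,q)$ follows by the identical argument with the roles swapped, now invoking $z_1\in\AP(S_1,x)$ just established together with Proposition \ref{AP}(1).

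For part (2), Proposition \ref{AP}(1) already places $u\in\AP(S,qx)$, so only maximality needs to be checked. Suppose $u'=u+s\in\AP(S,qx)$ with $s\in S$; the aim is to show $s=0$. Decompose $u'=qz_1''+pz_2''$ uniquely via Proposition \ref{AP}(1), and write $s=qa+pb$ with $a\in S_1$, $b\in S_2$. Equating $qz_1''+pz_2''=q(z_1+a)+p(z_2+b)$ and using $\gcd(p,q)=1$, I would produce an integer $\gamma$ with $z_1''=z_1+a+p\gamma$ and $z_2''=z_2+b-q\gamma$. The key step, and the main obstacle, is to pin down the sign $\gamma\ge 0$: since $z_2''$ is by definition the smallest element of $S_2$ in its residue class modulo $q$ and $z_2+b\in S_2$ lies in that same class, $z_2+b\ge z_2''$, which forces $\gamma\ge 0$. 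Consequently $z_1''-z_1=a+p\gamma\in S_1$ (since $p\in S_1$), so $z_1\preceq z_1''$ in $\AP(S_1,x)$. Under the singleton hypothesis $\Max\AP(S_1,x)=\{z_1\}$ this forces $z_1''=z_1$, hence $a=\gamma=0$; then $z_2''=z_2+b$ with $b\in S_2$, and maximality of $z_2$ in $\AP(S_2,q)$ forces $b=0$, so $s=0$. When instead $\Max\AP(S_2,q)=\{z_2\}$ is the singleton, I would run the symmetric derivation: isolate $z_2''\succeq z_2$ first, invoke the singleton hypothesis to get $z_2''=z_2$, and then use maximality of $z_1$ to eliminate the remaining terms. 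Throughout, the bookkeeping between the two simultaneous congruences modulo $p$ and modulo $q$ is what requires care.
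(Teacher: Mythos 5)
Your proof is correct. Part (1) is essentially the paper's argument verbatim: lift a strict domination $z_1'=z_1+a$ through Proposition \ref{AP}(1) to contradict maximality of $u$. Part (2), however, takes a genuinely different route. The paper picks a maximal element of $\AP(S,qx)$ lying above $u$, writes it as $qx_1+px_2$ with $x_1\in\AP(S_1,x)$, $x_2\in\AP(S_2,q)$, applies part (1) to it, and then needs the singleton hypothesis precisely to identify $z_1$ with $x_1$ (or $z_2$ with $x_2$) before the remaining terms can be killed. You instead decompose $u+s$ directly via the unique Ap\'ery representation and use the characterization of $\AP(S_2,q)$ as the set of \emph{smallest} elements of $S_2$ in each class modulo $q$ to pin down the sign of the correction term $\gamma$; after that, $z_1''-z_1=a+p\gamma\in S_1$ (using $p\in S_1$, which holds by the definition of gluing) and the maximality of $z_1$, then of $z_2$, finish the job. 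Notice that your first derivation never actually invokes the singleton hypothesis --- maximality of $z_1$ alone already forces $z_1''=z_1$ from $z_1\preceq z_1''$ --- so your argument in fact proves the stronger statement that $qz_1+pz_2\in\Max\AP(S,qx)$ whenever $z_1\in\Max\AP(S_1,x)$ and $z_2\in\Max\AP(S_2,q)$, with no singleton assumption. Consequently the ``symmetric derivation'' you sketch for the second singleton case is superfluous (and the symmetry is imperfect anyway, since $\AP(S_1,x)$ and $\AP(S_2,q)$ are taken with respect to different moduli, so one cannot isolate $z_2''\succeq z_2$ the same way); the single derivation already closes both cases.
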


\begin{proof}
(1) By Proposition \ref{AP} (2) we have that $z_1\in \AP(S_1,x)$. If $z_1$ is not maximal in $\AP(S_1,x)$ there exists $x_1\neq 0$ such that $z_1+x_1 \in \AP(S_1,x)$, so by Proposition \ref{AP} (1) we have that $u + qx_1 = q(z_1+x_1) + pz_2 \in\AP(S,qx)$, a contradiction. With a similar argument we have that $z_2\in\Max\AP(S_2,q)$

(2) By Proposition \ref{AP} (1), assume that there exist $x_1,t_1 \in \AP(S_1,x), x_2,t_2  \in \AP(S_2,q)$ such that $q(z_1+t_1) + p(z_2+t_2) = qx_1+px_2 \in \Max\AP(S,qx)$. By Proposition \ref{AP} (2), $z_1+t_1\in \AP(S_1,x)$, and by (1), $x_1\in\Max\AP(S_1,x)$ and $x_2 \in \Max\AP(S_2,q)$. Because $z_1$ is maximal we have that $t_1=0$. If we assume that there is only one element in $\Max\AP(S_1,x)$, then $z_1=x_1$ and so $z_2+t_2 = x_2$. Hence because $z_2$ is maximal we get that $t_2=0$. Now assume that there is only one element in $\Max\AP(S_2,q)$. Then $z_2 = x_2$ and $qz_1 + pt_2=qx_1$. Hence $z_1$ and $x_1$ are congruent modulo $(p)$ and so they are equal. Hence $t_2=0$ as well.
\end{proof}

\begin{rem}
Considering the definition of gluing, we may replace $q$ by $p$ in
the above proposition and get the same results for $\AP(S,py)$ where
$y\in S_2$. In other words, $\AP(S,py)=\{qz_1+pz_2;
z_1\in\AP(S_1,p), z_2\in\AP(S_2,y)\}$ and for all elements
$qz_1+pz_2\in\AP(S,py)$, we have $z_2\in\AP(S_2,y)$. In particular there will also be a similar statement as in Corollary \ref{max}.
\end{rem}

Now, we may characterize the symmetric property of the gluing of two numerical semigroups. The fact that the gluing of two symmetric numerical semigroups is symmetric is already well known, see for instance \cite[Proposition 9.11]{GR}, so we only need to prove one of the implications.

\begin{cor}\label{symmetric}
Let  $S=<qm_1,\ldots,qm_d,pn_1,\ldots,pn_k>$ be a gluing of
$S_1$ and $S_2$. Then, the following are equivalent:
\begin{enumerate}
\item $S$ is symmetric, and $S_1$ (or $S_2$) is symmetric.
\item $S_1$ and $S_2$ are symmetric.
\end{enumerate}
\end{cor}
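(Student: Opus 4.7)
The direction (2) $\Rightarrow$ (1) is the classical fact that the gluing of two symmetric numerical semigroups is symmetric, already cited in the introduction as \cite[Proposition 9.11]{GR}, so the plan is to prove (1) $\Rightarrow$ (2). I would use the characterization that a numerical semigroup $T$ is symmetric if and only if $\Max \AP(T, z)$ is a singleton for some (equivalently, every) $z \in T$, combined with Proposition \ref{AP} and Corollary \ref{max}.

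Assume $S$ is symmetric. First I would treat the case in which $S_1$ is assumed symmetric, aiming to show that $S_2$ is symmetric. Pick any $x \in S_1$ and examine $\AP(S, qx)$, which by Proposition \ref{AP}(1) equals $\{qz_1 + pz_2 : z_1 \in \AP(S_1, x),\, z_2 \in \AP(S_2, q)\}$. Since $S_1$ is symmetric, $\Max \AP(S_1, x)$ is a singleton $\{w_1\}$, which supplies exactly the uniqueness hypothesis needed to invoke Corollary \ref{max}(2). Therefore, for every $w_2 \in \Max \AP(S_2, q)$, the element $qw_1 + pw_2$ lies in $\Max \AP(S, qx)$. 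The symmetry of $S$ forces $\Max \AP(S, qx)$ to be a singleton, and since $qw_1 + pw_2 = qw_1 + pw_2'$ implies $w_2 = w_2'$, this in turn forces $\Max \AP(S_2, q)$ to be a singleton. Hence $S_2$ is symmetric.

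The case in which $S_2$ is assumed symmetric is entirely parallel, using the Remark following Corollary \ref{max} that swaps the roles of $(p, S_1)$ and $(q, S_2)$: pick any $y \in S_2$, invoke the analogous description of $\AP(S, py)$, and the analog of Corollary \ref{max}(2), to conclude that $\Max \AP(S_1, p)$ is a singleton, whence $S_1$ is symmetric.

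The main (and really only) subtle point is matching the input data to the hypotheses of Corollary \ref{max}(2): one must observe that the uniqueness hypothesis appearing there is precisely what the symmetry of the already-symmetric factor furnishes. Once this is recognized, the argument is a direct application of the Apéry-set description of gluing from Proposition \ref{AP}, and no further combinatorial work is needed.
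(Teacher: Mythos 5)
Your proof is correct and follows essentially the same route as the paper's: both rest on Proposition \ref{AP}, Corollary \ref{max}(2) (with the singleton hypothesis supplied by the already-symmetric factor), and the characterization of symmetry via $\#\Max\AP(\cdot,\cdot)=1$. The only cosmetic difference is that the paper works once with $\AP(S,qp)$ and reads off the uniqueness of both $\Max\AP(S_1,p)$ and $\Max\AP(S_2,q)$ simultaneously from the uniqueness of the representation, whereas you treat the two cases separately.
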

\begin{proof}
(1) $\Rightarrow$ (2) Let $z_1\in \Max\AP(S_1,p)$ and $z_2 \in \Max\AP(S_2,q)$. Then, by Corollary \ref{max} (2), $u=qz_1+pz_2\in \Max\AP(S,qp)$. By hypothesis, there is only one element in $\Max\AP(S,qp)$, and by Proposition \ref{AP}, this element can only be represented in a unique way as an element of the form $qs_1+ps_2$ with $s_1\in \AP(S_1,p)$ and $s_2 \in \AP(S_2,q)$. So $z_1$ is the only element in $\Max\AP(S_1,p)$ and $z_2$ the only one in $\Max\AP(S_2,q)$.

%(2) $\Rightarrow$ (1) Let $u=pz_1+qz_2 \in \Max\AP(S,qp)$. By Proposition \ref{AP} (1), we may assume that $z_1\in \AP(S_1,p)$ and $z_2 \in %\AP(S_2,q)$. Hence by Corollary \ref{max}, $z_1\in \Max\AP(S_1,p)$ and $z_2 \in \Max\AP(S_2,q)$. But by hypothesis, there is only one element in
%$\Max\AP(S_1,p)$ and only one element in $\Max\AP(S_2,q)$. Thus again by Proposition \ref{AP} (1), $u$ is the only element in $\in \Max\AP(S,qp)$
\end{proof}

The following Proposition provides a useful way to present the elements of a specific gluing in a unique way, which plays an essential role in our approach.

\begin{prop}\label{pur}
Let $S=<qm_1,\ldots,qm_d,pn_1,\ldots,pn_k>$ be a specific gluing of
$S_1$ and $S_2$. If $u\in S$, then
\begin{enumerate}
\item  there exist $z_1\in S_1$ and $z_2\in\emph\AP(S_2,q)$ such that $u=qz_1+pz_2$.
\item If $u=qs_1+ps_2$ for some $s_1\in S_1$ and $s_2\in \emph\AP(S_2,q)$, then $s_1=z_1$, $s_2=z_2$ and so  $\emph\ord_S(u)=\emph\ord_{S_1}(s_1)+\emph\ord_{S_2}(s_2)$.
\end{enumerate}
\end{prop}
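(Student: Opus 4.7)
The plan is to prove the two parts in order: part (1) is a direct decomposition of $u$ against the Ap\'ery set $\AP(S_2,q)$, while part (2) splits into a short uniqueness statement (from coprimality) and an order equality (the substantive part, and the only place where the specific gluing hypothesis enters).

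For part (1), since $u \in S$ the definition of gluing gives $u = qx + py$ for some $x \in S_1$ and $y \in S_2$. Writing $y = z_2 + \alpha q$ with $z_2 \in \AP(S_2, q)$ and $\alpha \geq 0$, and using that $p \in S_1$, one obtains $u = q(x + \alpha p) + p z_2 = q z_1 + p z_2$ with $z_1 := x + \alpha p \in S_1$.

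For the uniqueness in part (2), suppose $u = q s_1 + p s_2 = q z_1 + p z_2$ with $s_2, z_2 \in \AP(S_2, q)$. Then $q(s_1 - z_1) = p(z_2 - s_2)$, and since $\gcd(p,q) = 1$ we must have $z_2 - s_2 = \gamma q$ for some integer $\gamma$. If $\gamma \neq 0$, whichever of $s_2, z_2$ is larger would be congruent modulo $q$ to a strictly smaller element of $S_2$, contradicting Ap\'ery-set minimality. Hence $\gamma = 0$, so $s_2 = z_2$ and $s_1 = z_1$.

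The heart of the proposition is the order equality. By Lemma \ref{ex}(1), $\ord_S(u) \geq \ord_{S_1}(s_1) + \ord_{S_2}(s_2)$. For the reverse, I apply Lemma \ref{ex}(2) to obtain a representation $u = q w_1 + p w_2$ with $\ord_S(u) = \ord_{S_1}(w_1) + \ord_{S_2}(w_2)$. The same coprimality argument as above (now together with $s_2 \in \AP(S_2, q)$ and $w_2 \in S_2$) forces $w_2 = s_2 + \gamma q$ and $w_1 = s_1 - \gamma p$ for some integer $\gamma \geq 0$. A short induction on $\gamma$, applying Remark \ref{L} in $S_2$ with $x = q$ at each step, yields $\ord_{S_2}(w_2) \leq \ord_{S_2}(s_2) + \gamma\bigl(\ord_{S_2}(q) + l_q(S_2)\bigr)$. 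On the $S_1$ side, super-additivity of the order gives $\ord_{S_1}(w_1) \leq \ord_{S_1}(s_1) - \gamma\,\ord_{S_1}(p)$ from $s_1 = w_1 + \gamma p$. Summing these and invoking the defining inequality $\ord_{S_2}(q) + l_q(S_2) \leq \ord_{S_1}(p)$ of a specific gluing, the $\gamma$-proportional contributions cancel to yield $\ord_S(u) \leq \ord_{S_1}(s_1) + \ord_{S_2}(s_2)$. The main obstacle is arranging the inductive bound so that the correction appears in exactly the form $\ord_{S_2}(q) + l_q(S_2)$, which is precisely what the specific gluing hypothesis is designed to absorb.
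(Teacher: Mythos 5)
Your proof is correct, and it reorganizes the argument in a way that genuinely differs from the paper's, even though both rest on the same three ingredients (Lemma \ref{ex}, the bound $\ord_{S_2}(y+q)\leq\ord_{S_2}(y)+\ord_{S_2}(q)+l_q(S_2)$ from Remark \ref{L}, and the defining inequality of a specific gluing). The paper front-loads all the work into part (1): starting from an order-achieving representation given by Lemma \ref{ex}(2), it picks the one maximizing $\ord_{S_1}(z_1)$ and shows by a one-step contradiction that this extremal representation already has $z_2\in\AP(S_2,q)$; part (2) is then pure coprimality, and the order equality in (2) is inherited from (1). You instead make part (1) a triviality (peel off multiples of $q$ from $y$ and absorb them into the $S_1$-component via $p\in S_1$), and carry the real content in part (2) as a direct estimate: comparing the given Ap\'ery representation to an order-achieving one, quantifying their difference by a single $\gamma\geq 0$, and trading $\gamma\,\ord_{S_1}(p)$ against $\gamma(\ord_{S_2}(q)+l_q(S_2))$ via induction on $\gamma$. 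Your version avoids the extremal choice and makes explicit the multi-step accumulation that the paper's maximality argument handles implicitly one step at a time; the paper's version has the advantage that its part (1) is literally the stronger statement (existence of an Ap\'ery representation that also achieves the order), which is how the ``and so'' in the statement of part (2) is meant to be read. Since you prove the order equality independently rather than deducing it from part (1), nothing is lost.
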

\begin{proof}
(1) By Lemma \ref{ex}, there exist $s_1\in S_1$ and $s_2\in S_2$
such that $u=qs_1+ps_2$ and
$\ord_S(u)=\ord_{S_1}(s_1)+\ord_{S_2}(s_2)$. Among all $s_1\in
S_1$ with this property, we choose $z_1$ with the maximum possible
order. That is, $u=qz_1+pz_2$ and
$\ord_{S_1}(z_1)=\max\{\ord_{S_1}(s_1) ; u=qs_1+ps_2 \mbox{ for some
} s_2\in S_2, \ord_S(u)=\ord_{S_1}(s_1)+\ord_{S_2}(s_2)\}$.

We are going to prove that $z_2\in\AP(S_2,q)$. If not, then $z_2=s_2+q$ for some $s_2\in S_2$.
Note that by Remark \ref{L} $\ord_{S_2}(z_2)\leq\ord_{S_2}(s_2)+\ord_{S_2}(q)+l_q(S_2)$.
On the other hand $u=qz_1+p(s_2+q)=q(z_1+p)+ps_2$. Thus
\[\begin{array}{ll}
\ord_{S_1}(z_1)+\ord_{S_2}(z_2)&=\ord_S(u)\\
&\geq\ord_{S_1}(z_1+p)+\ord_{S_2}(s_2)\\
&\geq\ord_{S_1}(z_1+p)+\ord_{S_2}(z_2)-\ord_{S_2}(q)-l_q(S_2)\\
&\geq\ord_{S_1}(z_1)+\ord_{S_1}(p)+\ord_{S_2}(z_2)-\ord_{S_2}(q)-l_q(S_2).
\end{array}\]
Hence $\ord_{S_1}(p)-\ord_{S_2}(q)-l_q(S_2)\leq 0$, which implies that
$\ord_{S_1}(p)=\ord_{S_2}(q)+l_q(S_2)$ by the definition of specific gluing, Definition
\ref{sg}. Hence $\ord_{S_2}(u)=\ord_{S_1}(z_1+p)+\ord_{S_2}(s_2)$ which
contradicts our selection of $z_1$.

(2) By part (1), there exist $z_1\in S_1$ and $z_2\in\AP(S_2,q)$ such that $u=qs_1+ps_2=qz_1+pz_2$ and $\ord_S(u)=\ord_{S_1}(z_1)+\ord_{S_2}(z_2)$.
Since  $\gcd(p,q)=1$, we have $z_2$ and $s_2$ are congruent modulo $q$. Hence $z_2=s_2$, because $s_2,z_2\in\AP(S_2,q)$, and so $s_1=z_1$ as well.
\end{proof}

In particular, by taking $s_2=0$ in the above proposition we get:

\begin{cor}\label{order}
Let  $S=<qm_1,\ldots,qm_d,pn_1,\ldots,pn_k>$ be a specific gluing of
$S_1$ and $S_2$. Then
 $\ord_{S}(qx)= \ord_{S_1}(x)$ for all  $x\in S_1$.
\end{cor}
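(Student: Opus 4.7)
The plan is to deduce this corollary as an immediate consequence of Proposition \ref{pur}(2) applied to the trivial decomposition $qx = q\cdot x + p\cdot 0$. All the substantive work, in particular the use of the specific gluing hypothesis $\ord_{S_2}(q)+l_q(S_2)\leq \ord_{S_1}(p)$, has already been carried out in the proof of Proposition \ref{pur}, so here I only need to check that the trivial decomposition meets the hypotheses of its second part.

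The only nontrivial point is to verify that $0\in \AP(S_2,q)$. This is clear from the definition: $0-q=-q\notin S_2$, so $0$ is the smallest element in its congruence class modulo $q$, and therefore $0\in \AP(S_2,q)$. Since $x\in S_1$ by assumption, the equality $qx = q\cdot x + p\cdot 0$ expresses $qx$ in the form $qs_1+ps_2$ with $s_1=x\in S_1$ and $s_2=0\in \AP(S_2,q)$ required by Proposition \ref{pur}(2).

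Applying that proposition, we immediately obtain
\[
\ord_S(qx) \;=\; \ord_{S_1}(x)+\ord_{S_2}(0) \;=\; \ord_{S_1}(x),
\]
since $\ord_{S_2}(0)=0$. There is no genuine obstacle in this argument; the corollary is a direct instantiation of the uniqueness-of-representation statement in Proposition \ref{pur}.
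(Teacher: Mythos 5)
Your proof is correct and coincides with the paper's own argument: the authors introduce Corollary \ref{order} with the phrase ``by taking $s_2=0$ in the above proposition,'' which is precisely your instantiation $qx = q\cdot x + p\cdot 0$ with $0\in\AP(S_2,q)$ fed into Proposition \ref{pur}(2). Nothing further is needed.
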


And we also get the concrete value of the multiplicity of a specific gluing:

\begin{cor}\label{multiplicity}
Let  $S=<qm_1,\ldots,qm_d,pn_1,\ldots,pn_k>$ be a specific gluing of
$S_1$ and $S_2$. Then $m(S)= qm_1$.
\end{cor}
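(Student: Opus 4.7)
My plan is to reduce the statement to the inequality $qm_1 \leq pn_1$, since by Remark~\ref{m} we already have $m(S) = \min\{qm_1, pn_1\}$; once $qm_1 \leq pn_1$ is secured, $m(S) = qm_1$ follows at once.

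The crux of the argument is the auxiliary inequality
\[
q \;\leq\; \bigl(\ord_{S_2}(q) + l_q(S_2)\bigr)\, n_1,
\]
which I would establish by an Ap\'ery-set argument. Let $s_0 \in \AP(S_2, n_1)$ be the representative of the residue class $-q \pmod{n_1}$, so that $s_0 + q = N n_1$ for some integer $N \geq 1$. Writing $N n_1$ as a sum of $N$ copies of the generator $n_1$ shows $\ord_{S_2}(s_0+q) \geq N$. Next I would observe that $\ord_{S_2}(s_0) \leq r(S_2)$: indeed, by the defining property $(r+1)M = n_1 + rM$ of the reduction number, any element of order at least $r(S_2)+1$ is of the form $n_1 + s'$ with $s' \in S_2$, which is excluded for $s_0 \in \AP(S_2, n_1)$. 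Hence $s_0$ is admissible in the maximum defining $l_q(S_2)$, giving $l_q(S_2) \geq N - \ord_{S_2}(s_0) - \ord_{S_2}(q)$. Combined with the elementary bound $s_0 \geq \ord_{S_2}(s_0)\, n_1$ coming from any maximal expression of $s_0$, this rearranges to
\[
q \;=\; N n_1 - s_0 \;\leq\; \bigl(N - \ord_{S_2}(s_0)\bigr) n_1 \;\leq\; \bigl(\ord_{S_2}(q) + l_q(S_2)\bigr) n_1.
\]

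The conclusion is then a three-term chain: using the specific gluing hypothesis $\ord_{S_2}(q) + l_q(S_2) \leq \ord_{S_1}(p)$ together with the bound $p \geq \ord_{S_1}(p)\, m_1$ from a maximal expression of $p$ in $S_1$, we obtain $q \leq \ord_{S_1}(p)\, n_1 \leq (p/m_1)\, n_1$, i.e.\ $q m_1 \leq p n_1$. The main obstacle I anticipate is the auxiliary inequality on $q$; the rest of the argument is a routine chain of elementary comparisons.
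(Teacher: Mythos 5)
Your argument is correct, but it takes a genuinely different route from the paper. The paper also reduces, via Remark~\ref{m}, to showing $qm_1\leq pn_1$, but then argues by contradiction: if $pn_1<qm_1$ then $m(S)=pn_1$, so Lemma~\ref{zero}(1) gives $\ord_S(pn_1qm_1)=qm_1$, while Corollary~\ref{order} (order preservation under $x\mapsto qx$, itself a consequence of the unique--representation Proposition~\ref{pur} for specific gluings) together with Lemma~\ref{zero}(1) applied in $S_1$ gives $\ord_S(q\cdot pn_1m_1)=\ord_{S_1}(pn_1m_1)=pn_1$, a contradiction. You instead prove the inequality directly and quantitatively: your Ap\'ery-set argument (the representative $s_0$ of $-q$ modulo $n_1$ has order at most the reduction number, so it is admissible in the maximum defining $l_q(S_2)$) yields the explicit bound $q\leq(\ord_{S_2}(q)+l_q(S_2))\,n_1$, which chains with the specific-gluing hypothesis and $p\geq\ord_{S_1}(p)\,m_1$ to give $qm_1\leq pn_1$; each step checks out, including $N\geq 1$, $\ord_{S_2}(s_0)\leq r(S_2)$, and $s_0\geq\ord_{S_2}(s_0)\,n_1$. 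What your approach buys is independence from Proposition~\ref{pur}/Corollary~\ref{order} and a sharper intermediate statement bounding $q$ in terms of $n_1$, $\ord_{S_2}(q)$ and $l_q(S_2)$; what the paper's approach buys is brevity, since the machinery of Corollary~\ref{order} is already in place at that point of the text.
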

\begin{proof}
By Remark \ref{m} we only have to prove that $qm_1\leq pn_1$. Assume the contrary. Then $pn_1<qm_1$ and the multiplicity of $S$ is equal to $pn_1$. Hence by Lemma \ref{zero} (1), $\ord_S(pn_1qm_1)=qm_1$. But by Corollary \ref{order}, $\ord_S(pn_1qm_1) = \ord_{S_1}(pn_1m_1) = pn_1$, again by Lemma \ref{zero} (1), which is a contradiction.
\end{proof}

Our next proposition partially describes the torsion of the tangent cone of $S$ in terms of the torsion of the tangent cone of $S_1$. Namely:

\begin{prop}\label{t}
Let  $S=<qm_1,\ldots,qm_d,pn_1,\ldots,pn_k>$ be a specific gluing of
$S_1$ and $S_2$.
 If $\ord_S(u+qw)>\ord_S(u)+\ord_S(qw)$ for some
$u\in S$ and $w\in S_1$, then $u=qz_1+pz_2$ for some $z_1\in S_1$
and $z_2\in S_2$ such that
$\ord_S(u)=\ord_{S_1}(z_1)+\ord_{S_2}(z_2)$ and
$\ord_{S_1}(z_1+w)>\ord_{S_1}(z_1)+\ord_{S_1}(w)$.
\end{prop}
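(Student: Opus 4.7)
The plan is to reduce the statement to a direct application of the unique representation theorem for specific gluings established in Proposition \ref{pur}, together with Corollary \ref{order}.

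First I would invoke Proposition \ref{pur}(1) to write $u = qz_1 + pz_2$ where $z_1 \in S_1$ and $z_2 \in \AP(S_2,q)$, and where, thanks to Proposition \ref{pur}(2), this representation also satisfies $\ord_S(u) = \ord_{S_1}(z_1) + \ord_{S_2}(z_2)$. This already yields the first claim of the statement and the desired order identity; it only remains to establish the strict inequality $\ord_{S_1}(z_1+w) > \ord_{S_1}(z_1) + \ord_{S_1}(w)$.

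Next I would compute $\ord_S(u+qw)$ using the same unique representation machinery. Observe that
\[ u + qw = q(z_1+w) + pz_2, \]
and since $z_2$ is still in $\AP(S_2,q)$, Proposition \ref{pur}(2) applies to $u+qw$ with the summands $s_1 = z_1+w \in S_1$ and $s_2 = z_2 \in \AP(S_2,q)$, giving
\[ \ord_S(u+qw) = \ord_{S_1}(z_1+w) + \ord_{S_2}(z_2). \]
Finally, Corollary \ref{order} yields $\ord_S(qw) = \ord_{S_1}(w)$. Substituting these into the hypothesis
\[ \ord_S(u+qw) > \ord_S(u) + \ord_S(qw) \]
and cancelling $\ord_{S_2}(z_2)$ gives exactly $\ord_{S_1}(z_1+w) > \ord_{S_1}(z_1) + \ord_{S_1}(w)$.

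Honestly there is no hard step here; the proposition is essentially a bookkeeping consequence of the uniqueness in Proposition \ref{pur} and the order-preservation in Corollary \ref{order}. The only thing to be careful about is to use both parts of Proposition \ref{pur}: part (1) to produce the representation with $z_2 \in \AP(S_2,q)$, and part (2) to recognise that the analogous representation of $u+qw$ (which inherits $z_2$ on the $S_2$-side) automatically satisfies the additive order formula, so that no loss of order occurs on the $S_2$-side and the entire defect of ordinariness must come from $S_1$.
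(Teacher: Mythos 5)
Your proposal is correct and follows essentially the same route as the paper: both obtain the canonical representation $u=qz_1+pz_2$ with $z_2\in\AP(S_2,q)$ from Proposition \ref{pur}, identify the canonical representation of $u+qw$ as $q(z_1+w)+pz_2$ (you via the uniqueness in Proposition \ref{pur}(2), the paper by re-running the $\gcd(p,q)=1$ congruence argument that underlies it), and then conclude with Corollary \ref{order} and cancellation of $\ord_{S_2}(z_2)$.
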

\begin{proof}
By Proposition \ref{pur}, there exist $z_1, s_1\in S_1$ and $z_2,s_2\in \AP(S_2,q)$ such that
$u=qz_1+pz_2$, $u+qw=qs_1+ps_2$,
$\ord_S(u)=\ord_{S_1}(z_1)+\ord_{S_2}(z_2)$ and
$\ord_S(u+qw)=\ord_{S_1}(s_1)+\ord_{S_2}(s_2)$. Hence $q(z_1+w)+pz_2=qs_1+ps_2$, and since $\gcd(p,q)=1$ we get that $z_2$ and $s_2$ are congruent modulo $q$. Because $z_2, s_2\in\AP(S_2,q)$ we must have that $z_2=s_2$ and so $z_1+w=s_1$. By
assumption, $\ord_S(u+qw)>\ord_S(u)+\ord_S(qw)$ and by Corollary
\ref{order}, $\ord_S(qw)=\ord_{S_1}(w)$. So we have
$$\begin{array}{ll}
\ord_{S_1}(z_1+w)+\ord_{S_2}(z_2)&=\ord_{S_1}(s_1)+\ord_{S_2}(s_2)\\&>\ord_{S_1}(z_1)+\ord_{S_2}(z_2)+\ord_{S}(qw)\\
&=\ord_{S_1}(z_1)+\ord_{S_2}(s_2)+\ord_{S_1}(w).
\end{array}$$
Thus $\ord_{S_1}(z_1+w)>\ord_{S_1}(z_1)+\ord_{S_1}(w)$ and the
result follows.
\end{proof}

As a consequence of the above proposition, we
obtain that  the Cohen-Macaulay property of the tangent cone of a
specific gluing is equivalent to the Cohen-Macaulay property of
$G(S_1)$, which is one of the main results of this paper.

\begin{thm}\label{nc}
Let  $S=<qm_1,\ldots,qm_d,pn_1,\ldots,pn_k>$ be a  specific gluing
of $S_1$ and $S_2$. Then $G(S)$ is Cohen-Macaulay if and only if
$G(S_1)$ is Cohen-Macaulay.
\end{thm}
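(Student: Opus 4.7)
The plan is to characterize the Cohen-Macaulay property on both sides via Lemma \ref{CM} and then match them up by translating between orders in $S$ and orders in $S_1$ using Corollary \ref{order} and Proposition \ref{t}. By Corollary \ref{multiplicity} we have $m(S) = qm_1$, so $G(S)$ is Cohen-Macaulay if and only if $(t^{qm_1})^*$ is a non--zero--divisor over the elements $(t^u)^* \in G(S)$; equivalently, $\ord_S(u + qm_1) = \ord_S(u) + 1$ for all $u \in S$. Similarly, $G(S_1)$ is Cohen-Macaulay if and only if $\ord_{S_1}(x + m_1) = \ord_{S_1}(x) + 1$ for all $x \in S_1$.

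For the direction ``$G(S_1)$ Cohen-Macaulay $\Rightarrow$ $G(S)$ Cohen-Macaulay,'' I would argue by contrapositive. Suppose $(t^{qm_1})^*$ is a zero--divisor over some $(t^u)^* \in G(S)$, so $\ord_S(u + qm_1) > \ord_S(u) + 1$. Apply Proposition \ref{t} with $w = m_1 \in S_1$: noting that $\ord_S(qm_1) = 1$ by Lemma \ref{zero}(1) (since $qm_1 = m(S)$), we obtain $z_1 \in S_1$ with $\ord_{S_1}(z_1 + m_1) > \ord_{S_1}(z_1) + \ord_{S_1}(m_1) = \ord_{S_1}(z_1) + 1$, which shows $(t^{m_1})^*$ is a zero--divisor on $(t^{z_1})^* \in G(S_1)$, contradicting Cohen-Macaulayness of $G(S_1)$.

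For the converse direction, suppose $G(S_1)$ is not Cohen-Macaulay, so by Lemma \ref{CM} there exists $x \in S_1$ with $\ord_{S_1}(x + m_1) > \ord_{S_1}(x) + 1$. Set $u := qx \in S$. Corollary \ref{order} gives $\ord_S(u) = \ord_S(qx) = \ord_{S_1}(x)$, and also $\ord_S(qx + qm_1) = \ord_S(q(x + m_1)) = \ord_{S_1}(x + m_1)$. Combining with $\ord_S(qm_1) = 1$, we get
\[
\ord_S(u + qm_1) = \ord_{S_1}(x + m_1) > \ord_{S_1}(x) + 1 = \ord_S(u) + \ord_S(qm_1),
\]
so $(t^{qm_1})^*$ is a zero--divisor on $(t^{u})^* \in G(S)$ and $G(S)$ is not Cohen-Macaulay.

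The main obstacle is already cleared by Proposition \ref{pur}, which gave us the unique representation $u = qz_1 + pz_2$ with $z_2 \in \AP(S_2, q)$ and the additivity $\ord_S(u) = \ord_{S_1}(z_1) + \ord_{S_2}(z_2)$; from this, both Corollary \ref{order} and Proposition \ref{t} follow, and the theorem reduces to a short translation between the two Cohen-Macaulayness criteria. The crucial input of the specific gluing hypothesis is hidden in those two results: it is exactly what allows one to pull an obstruction in $G(S)$ back to an obstruction in $G(S_1)$ (while the other direction needs only $\ord_S(qm_1)=1$ and $\ord_S(qx) = \ord_{S_1}(x)$).
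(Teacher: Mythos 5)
Your proposal is correct and follows essentially the same route as the paper's own proof: both directions are handled by combining Corollary \ref{multiplicity}, Lemma \ref{CM}, Proposition \ref{t} (for pulling an obstruction in $G(S)$ back to $G(S_1)$), and Corollary \ref{order} (for pushing an obstruction in $G(S_1)$ forward to $G(S)$). Your closing remarks on where the specific gluing hypothesis enters accurately reflect the structure of the argument.
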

\begin{proof}
Observe first that by Corollary \ref{multiplicity}, $m(S)=qm_1$. Assume that G(S) is not Cohen-Macaulay. Then, by Lemma \ref{CM}, $\ord_S(u+qm_1)>\ord_s(u)+1$ for some $u\in S$. Hence by Proposition \ref{t}, there exists $z_1\in S_1$ such that $\ord_{S_1}(z_1+m_1)>\ord_{S_1}(z_1)+1$, and so again by Lemma \ref{CM} $G(S_1)$ is not Cohen-Macaulay.

Assume now that $G(S_1)$ is not Cohen-Macaulay. By Lemma \ref{CM} there exists $u\in S_1$ such that $\ord_{S_1}(u+m_1) > \ord_{S_1}(u) +1$. Hence by Corollary \ref{order}, $\ord_S(qu +qm_1) = \ord_{S_1}(u+m_1) > \ord_{S_1}(u) +1 = \ord_S(qu) +1$, and by Lemma \ref{CM} $G(S)$ is not Cohen-Macaulay.
\end{proof}

Now, the following corollary completes \cite[Theorem 2.6]{AMS}, where only the necessary part was obtained by a different method.

\begin{cor}\label{niceCM}
Let  $S=<qm_1,\ldots,qm_d,pn_1,\ldots,pn_k>$ be a nice  gluing of
$S_1$ and $S_2$. If $G(S_2)$ is Cohen-Macaulay, then $G(S)$ is
Cohen-Macaulay if and only if $G(S_1)$ is Cohen-Macaulay.
\end{cor}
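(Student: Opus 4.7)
The plan is to derive the corollary as an immediate consequence of Theorem \ref{nc} after checking that its hypothesis is satisfied. The only work needed is to observe that the two assumptions of the corollary---namely that $S$ is a nice gluing of $S_1$ and $S_2$, and that $G(S_2)$ is Cohen-Macaulay---together imply that $S$ is a specific gluing of $S_1$ and $S_2$. This is precisely the content of Remark \ref{ns}, which in turn was deduced from Proposition \ref{nice}: when $G(S_2)$ is Cohen-Macaulay one has $l_q(S_2)=0$, and the nice gluing condition $q=an_1$ with $a \leq \ord_{S_1}(p)$ gives $\ord_{S_2}(q)=a$, so that $\ord_{S_2}(q)+l_q(S_2) = a \leq \ord_{S_1}(p)$, matching the definition of specific gluing in Definition \ref{sg}.

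Once specificity is established, I would simply quote Theorem \ref{nc}, which states that for a specific gluing $S$ of $S_1$ and $S_2$, the tangent cone $G(S)$ is Cohen-Macaulay if and only if $G(S_1)$ is. Applying this to our $S$ yields the desired equivalence with no further calculation.

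There is essentially no obstacle here: all of the real work has been absorbed into Proposition \ref{nice} (which characterizes nice gluings with Cohen-Macaulay $G(S_2)$ in terms of the valuation-style condition), Remark \ref{ns} (which packages this into a comparison with specific gluings), and Theorem \ref{nc} (which is the key structural result). The corollary is a clean two-step invocation of these facts, and its value lies in reproving the forward direction of \cite[Theorem 2.6]{AMS} by our Apéry-set method while simultaneously supplying the converse direction, which was not treated in \cite{AMS}.
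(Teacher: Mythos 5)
Your proposal is correct and follows exactly the paper's own two-step argument: invoke Remark \ref{ns} to see that a nice gluing with $G(S_2)$ Cohen-Macaulay is a specific gluing, then apply Theorem \ref{nc}. The extra unpacking of why $\ord_{S_2}(q)+l_q(S_2)\leq \ord_{S_1}(p)$ holds is a faithful elaboration of what the paper leaves to Remark \ref{ns} and Proposition \ref{nice}.
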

\begin{proof}
Just note that $S$ is now a specific gluing of $S_1$ and $S_2$, as we
have seen in Remark \ref{ns}. Hence the result follows from Theorem
\ref{nc}.
\end{proof}

Next, we deal with the Gorenstein property of the tangent cone of an specific gluing. First, we must characterize the $M$-purity.

\begin{prop}\label{M-purity}
Let $S=<qm_1,\ldots,qm_d,pn_1,\ldots,pn_k>$ be a specific gluing of
$S_1$ and $S_2$. For any $x\in S_1$, the following are equivalent:
\begin{enumerate}
\item $S$ is symmetric and $M$-pure with respect to $qx$, and $S_1$ (or $S_2$) is symmetric.
\item $S_1$ is symmetric and $M$-pure with respect to $x$ and $S_2$ is symmetric and $M$-pure with respect to $q$.
\end{enumerate}
\end{prop}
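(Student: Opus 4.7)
The plan is to use Corollary~\ref{symmetric} to reduce the symmetry part of the statement, and then prove the equivalence of $M$--purity conditions by using the explicit descriptions of Ap\'ery sets given in Proposition~\ref{AP} and Corollary~\ref{max}, together with the order computations from Proposition~\ref{pur} and Corollary~\ref{order}. First, Corollary~\ref{symmetric} tells us that in both (1) and (2) we are assuming $S_1$ and $S_2$ are both symmetric, so that $\Max\AP(S_1,x)=\{w_1\}$, $\Max\AP(S_2,q)=\{w_2\}$ and, by Corollary~\ref{max}(2), $\Max\AP(S,qx)=\{w\}$ with $w=qw_1+pw_2$. Hence both sides of the equivalence reduce to questions about $\Max_M$ of these Ap\'ery sets, and by Lemma~\ref{MG} (applied using Proposition~\ref{PS}'s style of reasoning), the relevant conditions are: every $u\in\AP(S,qx)$ satisfies $u\preceq_M w$, and similarly for $S_1,S_2$.

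The key auxiliary observation I would record at the start is that since $S_2$ is symmetric, for every $z_2\in\AP(S_2,q)$ the complement $y_2:=w_2-z_2$ still lies in $\AP(S_2,q)$ (because $\AP(S_2,q)=\{0<a_1<\cdots<a_{q-1}=w_2\}$ with $a_i+a_{q-1-i}=w_2$). This is exactly what compensates for the asymmetric role of $p$ and $q$ in the definition of a specific gluing: it lets us invoke Proposition~\ref{pur}(2) on $py_2=q\cdot 0+p y_2$ to conclude $\ord_S(py_2)=\ord_{S_2}(y_2)$, which Lemma~\ref{ex} alone does not provide.

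For the direction $(2)\Rightarrow(1)$, an arbitrary element $u\in\AP(S,qx)$ is written by Proposition~\ref{AP} as $u=qz_1+pz_2$ with $z_1\in\AP(S_1,x)$, $z_2\in\AP(S_2,q)$; then $w-u=qy_1+py_2$ with $y_1=w_1-z_1\in S_1$ and $y_2=w_2-z_2\in\AP(S_2,q)$ (by the key observation). Proposition~\ref{pur}(2) gives $\ord_S(u)=\ord_{S_1}(z_1)+\ord_{S_2}(z_2)$, $\ord_S(w)=\ord_{S_1}(w_1)+\ord_{S_2}(w_2)$ and $\ord_S(w-u)=\ord_{S_1}(y_1)+\ord_{S_2}(y_2)$, and the $M$--purity of $S_1$ w.r.t.\ $x$ and of $S_2$ w.r.t.\ $q$ then forces $\ord_S(u)+\ord_S(w-u)=\ord_S(w)$, i.e.\ $u\preceq_M w$.

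For $(1)\Rightarrow(2)$, I would use two test specializations: to obtain $M$--purity of $S_1$ w.r.t.\ $x$, apply the hypothesis to $u=qz_1+pw_2\in\AP(S,qx)$, where $w-u=q(w_1-z_1)$, and use Corollary~\ref{order} to compute $\ord_S(q(w_1-z_1))=\ord_{S_1}(w_1-z_1)$; the relation $\ord_S(u)+\ord_S(w-u)=\ord_S(w)$ then collapses to $\ord_{S_1}(z_1)+\ord_{S_1}(w_1-z_1)=\ord_{S_1}(w_1)$. Analogously, to get $M$--purity of $S_2$ w.r.t.\ $q$, test with $u=qw_1+pz_2$; now $w-u=p(w_2-z_2)$ with $w_2-z_2\in\AP(S_2,q)$ by the key observation, and Proposition~\ref{pur}(2) evaluates $\ord_S(p(w_2-z_2))=\ord_{S_2}(w_2-z_2)$, giving $\ord_{S_2}(z_2)+\ord_{S_2}(w_2-z_2)=\ord_{S_2}(w_2)$. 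The main obstacle, as already flagged, is precisely the asymmetric treatment of $p$ versus $q$ in Proposition~\ref{pur}: without the symmetry of $S_2$ one cannot ensure that $w_2-z_2$ belongs to $\AP(S_2,q)$, and then $\ord_S(p(w_2-z_2))$ could exceed $\ord_{S_2}(w_2-z_2)$, breaking the order bookkeeping.
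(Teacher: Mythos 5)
Your proposal is correct and follows essentially the same route as the paper's proof: both reduce to showing $\Max_M\AP(S,qx)$, $\Max_M\AP(S_1,x)$ and $\Max_M\AP(S_2,q)$ are singletons and transfer the $\preceq_M$ relations back and forth using Proposition~\ref{AP}, Corollary~\ref{max}, Proposition~\ref{pur} and Corollary~\ref{order}. The only real difference is cosmetic: you test with $qz_1+pw_2$ and $qw_1+pz_2$ and use the symmetry of the Ap\'ery sets to keep complements inside $\AP(S_2,q)$, which nicely makes explicit the ``similar argument'' for the $S_2$ half that the paper leaves to the reader.
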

\begin{proof}
Observe first that by Corollary \ref{symmetric}, $S$, $S_1$, and $S_2$ are symmetric.

(2) $\Rightarrow$ (1). By hypothesis and by Corollary \ref{max}, we have that $\Max\AP(S,qm_1)=\{w=qw_1+pw_2\}$, where
$\Max\AP(S_1,m_1)=\Max_M\AP(S_1,m_1)=\{w_1\}$ and
$\Max\AP(S_2,q)=\Max_M\AP(S_2,q)=\{w_2\}$.
Now it is enough to show that, in fact,
$\Max_M\AP(S,qm_1)=\{w\}$. For this, we will see that $u\preceq_Mw$ for all
$u\in\AP(S,qm_1)$. So let $u\in\AP(S,qm_1)$. Then, $u=qz_1+pz_2$ for some
$z_1\in\AP(S_1,m_1)$, $z_2\in\AP(S_2,q)$ by Proposition \ref{AP}. Hence
$\ord_S(u)=\ord_{S_1}(z_1)+\ord_{S_2}(z_2)$, by Proposition \ref{pur}.
On the other hand $z_1\preceq_Mw_1$ and $z_2\preceq_Mw_2$, that is
$z_1+s_1=w_1$ with $\ord_{S_1}(z_1)+\ord_{S_1}(s_1)=\ord_{S_1}(w_1)$
and $z_2+s_2=w_2$ with
$\ord_{S_2}(z_2)+\ord_{S_2}(s_2)=\ord_{S_2}(w_2)$, for some $s_1\in \AP(S_1,m_1)$ and $s_2\in \AP(S_2,q)$. Let
$u'=qs_1+ps_2$. Note that
$\ord_S(u')=\ord_{S_1}(s_1)+\ord_{S_2}(s_2)$ and also
$\ord_{S}(w)=\ord_{S_1}(w_1)+\ord_{S_2}(w_2)$, by Proposition \ref{pur}. Hence we have $u+u'=w$ and
$$\begin{array}{ll}
\ord_S(w)&=\ord_{S_1}(w_1)+\ord_{S_2}(w_2)\\
&=\ord_{S_1}(z_1)+\ord_{S_1}(s_1)+\ord_{S_2}(z_2)+\ord_{S_2}(s_2)\\
&=(\ord_{S_1}(z_1)+\ord_{S_2}(z_2))+(\ord_{S_1}(s_1)+\ord_{S_2}(s_2))\\
&= \ord_{S_1}(u)+\ord_{S_2}(u'),\end{array}$$ which means that
$u\preceq_Mw$, as we wanted to see.

(1) $\Rightarrow$ (2) By hypothesis we have that $\Max\AP(S,qm_1)=\Max_M\AP(S,qm_1)=\{w\}$. On the other hand, by Corollary \ref{max},
$w=qz_1+pz_2$, where  $\max\AP(S_1,m_1)=\{z_1\}$ and
$\max\AP(S_2,q)=\{z_2\}$. Thus it is enough to show that
$\Max_M\AP(S_1,m_1)=\{z_1\}$ and $\Max_M\AP(S_2,q)=\{z_2\}$. Let
$x\in\AP(S_1,m_1)$. Then $qx\in\AP(S,qm_1)$, by Proposition
\ref{AP}. As a consequence, $qx\preceq_M w$, that is $qx+qs_1+ps_2=qz_1+pz_2$
for some $s_1\in\AP(S_1,m_1)$ and $s_2\in\AP(S_2,q)$, with
$\ord_S(w)=\ord_S(qx)+\ord_S(qs_1+ps_2)$. Because $\gcd(p,q)=1$ we get that
$z_2$ is congruent with $s_2$ modulo $q$. Now, since $z_2,s_2\in\AP(S_2,q)$ we have that
$z_2=s_2$ and so $x+s_1=z_1$.  On the other hand
$\ord_S(qs_1+ps_2)=\ord_{S_1}(s_1)+\ord_{S_2}(s_2)$,
$\ord_S(w)=\ord_{S_1}(z_1)+\ord_{S_2}(z_2)$ and
$\ord_{S}(qx)=\ord_{S_1}(x)$, by Lemma \ref{pur}. Hence
$\ord_{S_1}(x)+\ord_{S_1}(s_1)=\ord_{S_1}(z_1)$ and so $x\preceq_M
z_1$. Thus $\Max_M\AP(S_1,m_1)=\{z_1\}$ as we wanted to see.

By a similar argument we obtain that $\Max_M\AP(S_2,q)=\{z_2\}$ and the result follows.
\end{proof}

Now, we may state the following characterization of the Gorenstein property of a specific gluing, which is another of the main results of this paper.
\begin{thm}\label{Gor}
Let $S=<qm_1,\ldots,qm_d,pn_1,\ldots,pn_k>$ be a specific gluing of
$S_1$ and $S_2$. Assume that $S_2$ is symmetric and $M$-pure with respect to $q$. Then $G(S)$ is Gorenstein if and only if $G(S_1)$ is Gorenstein.
\end{thm}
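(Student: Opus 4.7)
The plan is that this theorem should follow almost immediately from the machinery already in place, with no further Ap\'ery-set argument required. I will invoke four previous results: Proposition~\ref{PS} (for a numerical semigroup whose tangent cone is Cohen--Macaulay, the Gorenstein property of the tangent cone is equivalent to the semigroup being symmetric and $M$-pure); Theorem~\ref{nc} (Cohen--Macaulayness of the tangent cone passes in both directions across a specific gluing); Proposition~\ref{M-purity} (the property ``symmetric and $M$-pure with respect to $qx$'' transfers across a specific gluing, modulo the common factor of $S_2$ being symmetric and $M$-pure with respect to $q$); and Corollary~\ref{multiplicity} ($m(S)=qm_1$). The essential alignment is to apply Proposition~\ref{M-purity} with $x=m_1$, since then $qx=qm_1=m(S)$ and $x=m_1=m(S_1)$, so ``$M$-pure with respect to $qx$'' coincides with ``$M$-pure'' for $S$, and ``$M$-pure with respect to $x$'' coincides with ``$M$-pure'' for $S_1$. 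The standing hypothesis on $S_2$ is exactly the common factor that appears on the right-hand side of Proposition~\ref{M-purity}.

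With these pieces in hand, both directions follow the same template. For ``$G(S_1)$ Gorenstein implies $G(S)$ Gorenstein,'' Proposition~\ref{PS} gives that $G(S_1)$ is Cohen--Macaulay and that $S_1$ is symmetric and $M$-pure; Theorem~\ref{nc} then lifts Cohen--Macaulayness to $G(S)$; the properties of $S_1$ and $S_2$ together form condition (2) of Proposition~\ref{M-purity} with $x=m_1$, so condition (1) yields that $S$ is symmetric and $M$-pure (with respect to $m(S)=qm_1$); and a second application of Proposition~\ref{PS} concludes that $G(S)$ is Gorenstein. The reverse direction traces the same chain backwards: starting from $G(S)$ Gorenstein, Proposition~\ref{PS} and Theorem~\ref{nc} give that $G(S_1)$ is Cohen--Macaulay and that $S$ is symmetric and $M$-pure with respect to $qm_1$; together with the hypothesis on $S_2$ this is condition (1) of Proposition~\ref{M-purity} at $x=m_1$, so condition (2) yields that $S_1$ is symmetric and $M$-pure; and Proposition~\ref{PS} finishes.

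I do not expect a genuine obstacle here, since the combinatorial heart of the argument has already been absorbed into Proposition~\ref{M-purity} and Theorem~\ref{nc}. The only delicate point is purely notational: one must reconcile ``$M$-pure'' in the sense of Proposition~\ref{PS} (that is, $M$-pure with respect to the multiplicity) with ``$M$-pure with respect to $qm_1$'' for $S$ and ``$M$-pure with respect to $m_1$'' for $S_1$. This identification is immediate from Corollary~\ref{multiplicity} together with the fact that $m_1=m(S_1)$ by the minimal generation of $S_1$.
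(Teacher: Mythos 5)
Your proposal is correct and follows essentially the same route as the paper's own proof: both directions combine Theorem~\ref{nc} for the Cohen--Macaulay transfer, Proposition~\ref{M-purity} at $x=m_1$ for the symmetric/$M$-pure transfer, and Proposition~\ref{PS} to translate between these properties and Gorensteinness, with Corollary~\ref{multiplicity} supplying the identification $m(S)=qm_1$. No gaps.
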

\begin{proof}
Assume that $G(S)$ is Gorenstein. Then, by Theorem \ref{nc} $G(S_1)$ is Cohen-Macaulay. On the other hand, by Proposition \ref{PS} $S$ is symmetric and $M$-pure with respect to $qm_1$. Hence by Proposition \ref{M-purity} $S_1$ is symmetric and $M$-pure with respect to $m_1$. Now, again by Proposition \ref{PS} we have that $G(S_1)$ is Gorenstein.

Assume that $G(S_1)$ is Gorenstein. By Theorem \ref{nc} $G(S)$ is Cohen-Macaulay. Also, by Proposition \ref{PS} $S_1$ is symmetric and $M$-pure with respect to $m_1$. Hence by hypothesis and Proposition \ref{M-purity}, $S$ is symmetric and $M$-pure with respect to $qm_1$. Finally, once more by Proposition \ref{PS} we have that $G(S)$ is Gorenstein.
\end{proof}

In particular, by Corollary \ref{niceCM} we have the following concrete result in the case of a nice gluing.

\begin{cor}\label{niceGor}
Let  $S=<qm_1,\ldots,qm_d,pn_1,\ldots,pn_k>$ be a nice  gluing of
$S_1$ and $S_2$. If $G(S_2)$ is Gorenstein, then $G(S)$ is
Gorentein if and only if $G(S_1)$ is Gorenstein.
\end{cor}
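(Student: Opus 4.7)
The plan is to reduce the statement to Theorem \ref{Gor} by verifying its hypotheses, which should follow almost mechanically from the characterizations of the Gorenstein tangent cone already proved.

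First, I would observe that since $G(S_2)$ is Gorenstein, in particular it is Cohen-Macaulay. Combined with the nice gluing hypothesis, Remark \ref{ns} then tells us that $S$ is a specific gluing of $S_1$ and $S_2$. So the setup of Theorem \ref{Gor} is in place, and what remains is to verify that $S_2$ is symmetric and $M$-pure with respect to $q$.

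Next, I would invoke Proposition \ref{PS}: since $G(S_2)$ is Gorenstein (and hence $G(S_2)$ is Cohen-Macaulay), $S_2$ is symmetric and $M$-pure with respect to $ke_2$ for every $k>0$, where $e_2=m(S_2)=n_1$. By the definition of nice gluing (Definition \ref{G}), we have $q = an_1 = ae_2$ with $a>1$. Taking $k=a$ gives exactly that $S_2$ is symmetric and $M$-pure with respect to $q$, which is the remaining hypothesis of Theorem \ref{Gor}.

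Finally, applying Theorem \ref{Gor} to the specific gluing $S$ of $S_1$ and $S_2$ yields the desired equivalence: $G(S)$ is Gorenstein if and only if $G(S_1)$ is Gorenstein. I expect there to be no real obstacle here; the statement is essentially a packaging of Theorem \ref{Gor} together with the bridge from nice gluing to specific gluing (Remark \ref{ns}) and the transfer of the $M$-purity hypothesis through Proposition \ref{PS}, so the proof should be only a few lines long.
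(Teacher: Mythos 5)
Your proof is correct and follows exactly the route the paper intends (the paper states this corollary without a written proof, as an immediate consequence of Theorem \ref{Gor}): Remark \ref{ns} turns the nice gluing with $G(S_2)$ Cohen--Macaulay into a specific gluing, and Proposition \ref{PS} applied to $S_2$ with $q=an_1=a\cdot m(S_2)$ supplies the hypothesis that $S_2$ is symmetric and $M$-pure with respect to $q$. Nothing is missing.
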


We finish this section by observing that as a natural generalization of the Cohen-Macaulay property, one may consider the
Buchsbaum state. The following example shows that this condition is not preserved by nice gluing, even if $G(S_2)$ is Cohen-Macaulay.

\begin{ex}  Let $S_1=<4,11,29>$ and $S_2=<2,3>$. Then $G(S_1)$ is Buchsbaum, since $\lambda(\H^0_\fm(G(S))=1$ by \cite[Example 4.5(3)]{CZ1}.
Now consider the numerical semigroup $S=<16,44,116,30,45>$ obtained as a nice gluing of $S_1$ and
$S_2$ with $q=2\times 2=4$, $p=4+11=15$. Then
$\ord_S(116)=\ord_S(16)=1$ and $132=116+16=3\times 44\in 3M$. So
that $(t^{116})^*(t^{16})^*=0$ which means that
$(t^{116})^*\in\H^0_\fm(G(S)$. On the other hand $\ord_S(116+30)=2$
which means that  $(t^{116})^*(t^{30})^*\neq 0$. Hence $G(S)$ is not
Buchsbaum.

\end{ex}

\section{non--decreasing Hilbert functions }

In this section we study the growth of the Hilbert functions of
monomial curves obtained by gluing. Recall that for a numerical
semigroup $S$, the Hilbert function of $S$ is the function given for
any non-negative integer $n$ as $H(n) = \lambda (\fm^n/\fm^{n+1}) =
\# (nM \setminus (n+1)M)$. Our goal is to construct monomial curves
with non--decreasing Hilbert functions by gluing monomial curves
with the same property. For instance, as we have observed in Section
1, the Hilbert function of a monomial curve with Cohen-Macaulay
tangent cone is non--decreasing. Hence by Theorem \ref{nc}, we know
that if $S$ is a specific gluing of two numerical semigroups $S_1$
and $S_2$ such that $S_1$ has a Cohen-Macaulay tangent cone, then
$S$ has a Cohen-Macaulay tangent cone and so its Hilbert function is
non--decreasing. The following result shows that in fact we can
replace the Cohen-Macaulay property of the tangent cone of $S_1$ by
the non--decreasing property of its Hilbert function to get the same
property for $S$.

\begin{thm}\label{Hn}
Let  $S=<qm_1,\ldots,qm_d,pn_1,\ldots,pn_k>$ be a specific  gluing
of $S_1$ and $S_2$. If $S_1$ has non--decreasing Hilbert function,
 then $G(S)$ has non--decreasing Hilbert function.
\end{thm}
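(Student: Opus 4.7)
The plan is to reduce the Hilbert function of $S$ to a non-negative weighted convolution involving the Hilbert function of $S_1$, exploiting the unique order-preserving representation of elements of $S$ guaranteed by Proposition \ref{pur}.

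First, I would observe that Proposition \ref{pur} provides a bijection
\[
S_1 \times \AP(S_2,q) \;\longleftrightarrow\; S, \qquad (z_1,z_2) \longmapsto qz_1+pz_2,
\]
and, crucially, this bijection satisfies the additivity of orders
\[
\ord_S(qz_1+pz_2) = \ord_{S_1}(z_1) + \ord_{S_2}(z_2).
\]
Existence of such a representation for every $u\in S$ is part (1) of Proposition \ref{pur}; uniqueness (and the additive formula for the order) is part (2), applied with $s_1=z_1$, $s_2=z_2$.

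Next, I would group elements of $S$ by their order. Using the notation $\beta_j(q)=\#\{z\in\AP(S_2,q) : \ord_{S_2}(z)=j\}$ from Remark \ref{beta}, and setting $H_{S_1}(m)=0$ for $m<0$, the bijection above yields
\[
H_S(n) \;=\; \sum_{j=0}^{d(q)} \beta_j(q)\, H_{S_1}(n-j)
\]
for every $n\geq 0$, where $d(q)=\max\{\ord_{S_2}(z) : z\in \AP(S_2,q)\}$.

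Finally, since $H_{S_1}$ is non-decreasing by hypothesis, and since $H_{S_1}(0)=1\geq 0=H_{S_1}(-1)$, the inequality $H_{S_1}(n+1-j)\geq H_{S_1}(n-j)$ holds for every $j\geq 0$ and every $n\geq 0$. Multiplying by the nonnegative weights $\beta_j(q)$ and summing over $j$ produces
\[
H_S(n+1) - H_S(n) \;=\; \sum_{j=0}^{d(q)} \beta_j(q)\bigl(H_{S_1}(n+1-j)-H_{S_1}(n-j)\bigr) \;\geq\; 0,
\]
which is the desired conclusion.

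I do not expect any genuine obstacle: the specific gluing hypothesis has been precisely engineered in Proposition \ref{pur} to yield the order-preserving unique representation that turns $H_S$ into a convolution of $H_{S_1}$ with a nonnegative sequence. Without the specific gluing assumption one would only have the inequality $\ord_S(qz_1+pz_2) \geq \ord_{S_1}(z_1)+\ord_{S_2}(z_2)$ from Lemma \ref{ex}(1), which is not enough to control $H_S(n)$ exactly. The only point that deserves a careful check is the edge case $j=n+1$ in the final inequality, where one uses $H_{S_1}(0)=1 > H_{S_1}(-1)=0$; for $j>n+1$ both terms vanish.
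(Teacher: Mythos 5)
Your argument is correct, and it rests on exactly the same engine as the paper's proof, namely the unique order-additive representation $u=qz_1+pz_2$ with $z_1\in S_1$, $z_2\in\AP(S_2,q)$ and $\ord_S(u)=\ord_{S_1}(z_1)+\ord_{S_2}(z_2)$ furnished by Proposition \ref{pur}. The difference lies in how the conclusion is extracted. The paper keeps the argument at the level of maps: from the hypothesis it picks, for each $k$, an injection $f_k\colon kM_1\setminus(k+1)M_1\hookrightarrow (k+1)M_1\setminus(k+2)M_1$, transports it to $S$ by $qz_1+pz_2\mapsto qf_k(z_1)+pz_2$, and checks well-definedness and injectivity of the resulting map $g_n$ using the uniqueness statement of Proposition \ref{pur}. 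You instead turn the bijection $S_1\times\AP(S_2,q)\to S$ directly into the counting identity $H_S(n)=\sum_{j}\beta_j(q)\,H_{S_1}(n-j)$ (with $\beta_j(q)$ computed in $S_2$, a harmless adaptation of the notation of Remark \ref{beta}) and conclude by nonnegativity of the weights. This buys something: the convolution formula is an exact description of $H_S$ in terms of $H_{S_1}$ and the order distribution of $\AP(S_2,q)$, which is strictly more information than the monotonicity transfer the paper establishes, and it dispenses with the choice of the injections $f_k$ and the accompanying verifications. The only delicate point is the boundary behaviour of the sum ($j=n+1$ contributing $H_{S_1}(0)-H_{S_1}(-1)=1\geq 0$, and $j>n+1$ contributing $0$), and you handle it correctly.
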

\begin{proof}
Let $n\geq 0$ and $u\in nM\setminus(n+1)M$. By Lemma \ref{pur}, there exists a representation $u=qz_1+pz_2$ such that $z_2\in\AP(S_2,q)$ and  $\ord_S(u)=\ord_{S_1}(z_1)+\ord_{S_2}(z_2)$. Let $k=\ord_{S_1}(z_1)$. Since the Hilbert function of $S_1$ is
non--decreasing, there exists an injective map $f_k:kM_1\setminus(k+1)M_1\hookrightarrow
(k+1)M_1\setminus(k+2)M_1$. Now, taking into account Lemma \ref{pur}, we have that
$$\begin{array}{ll}
\ord_S(qf_k(z_1)+pz_2)&=\ord_{S_1}(f_k(z_1))+\ord_{S_2}(z_2)\\
&=k+1+\ord_{S_2}(z_2)=\ord_S(u)+1.\end{array}$$ Hence the function
$$g_n:nM\setminus(n+1)M\rightarrow (n+1)M\setminus(n+2)M$$ defined
by $g_n(u):=qf_k(z_1)+pz_2$ is well defined. In order to show that
the Hilbert function of $S$ is non--decreasing, it is enough to see
that $g_n$ is an injection. Assume that $u'=qz'_1+pz'_2\in
nM\setminus(n+1)M$ such that $g_n(u)=g_n(u')$. By Lemma \ref{pur} we
may assume that $z'_2 \in \AP(S_2,q)$. Thus
$qf_k(z_1)+pz_2=qf_{k'}(z'_1)+pz'_2$, where $k'=\ord_{S_1}(z_1)$.
Since $\gcd(p,q)=1$, we have $z_2$ is congruent with $z'_2$ modulo
$q$. On the other hand  $z_2, z'_2\in\AP(S_2,q)$, hence $z_2=z'_2$
and so $z_1=z'_1$, and $u=u'$ as well.
\end{proof}

As we have observed in  Remark \ref{ns}, if
$S$ is a nice gluing of $S_1$
and $S_2$, where $G(S_2)$ is Cohen-Macaulay, then $S$ is a specific
gluing of $S_1$ and $S_2$. Hence we may state the following result which
is also proved in \cite[Theorem 3.1]{AMS} by a different technique.

\begin{cor}\label{ngh}
Let $S=<qm_1,\ldots,qm_d,pn_1,\ldots,pn_k>$ be a nice  gluing of
$S_1$ and $S_2$. If $G(S_2)$ is Cohen-Macaulay and $S_1$ has
non--decreasing Hilbert function,
 then $G(S)$ has non--decreasing Hilbert function.
\end{cor}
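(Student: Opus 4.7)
The plan is essentially a one-line reduction, since all the heavy lifting has already been done in the previous results.

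First I would recall Remark \ref{ns}, which tells us that whenever $S$ is a nice gluing of $S_1$ and $S_2$ and $G(S_2)$ is Cohen-Macaulay, the gluing is automatically specific. The hypotheses of the corollary match this situation exactly: we are given that $S$ is a nice gluing and that $G(S_2)$ is Cohen-Macaulay. So the first step is simply to invoke this remark to upgrade ``nice gluing'' to ``specific gluing.''

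Once we know $S$ is a specific gluing of $S_1$ and $S_2$, the remaining hypothesis we have is that $S_1$ has non-decreasing Hilbert function. This is precisely the hypothesis needed to apply Theorem \ref{Hn}, which asserts that under these conditions $G(S)$ has non-decreasing Hilbert function. So the second (and final) step is to apply Theorem \ref{Hn} and conclude.

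There is no real obstacle here: the work has been absorbed into the preceding Remark \ref{ns} (which reduces nice gluings with $G(S_2)$ Cohen-Macaulay to specific gluings via the inequality $\ord_{S_2}(q)+l_q(S_2)=\ord_{S_2}(q)\leq \ord_{S_1}(p)$ coming from $l_q(S_2)=0$ when $G(S_2)$ is Cohen-Macaulay, together with the definition of nice gluing) and into Theorem \ref{Hn} (which constructs the injections $g_n:nM\setminus(n+1)M\hookrightarrow(n+1)M\setminus(n+2)M$ using the unique representations provided by Proposition \ref{pur}). The corollary is therefore a short two-sentence deduction combining these two statements.
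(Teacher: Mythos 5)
Your proposal is correct and is exactly the argument the paper gives: the sentence preceding the corollary invokes Remark \ref{ns} to see that a nice gluing with $G(S_2)$ Cohen-Macaulay is a specific gluing, and then Theorem \ref{Hn} applies directly. Your added justification of Remark \ref{ns} via $l_q(S_2)=0$ matches the paper's observation after Proposition \ref{nice} as well.
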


\begin{cor}
Let $S=<qm_1,\ldots,qm_d,pn_1,\ldots,pn_k>$ be a gluing of $S_1$ and
$S_2$. Assume that $S_1$ has non--decreasing Hilbert function. Then,
for a given $q$, all numerical semigroups $S$ obtained by gluing of
$S_1$ and $S_2$, except finitely many of them, have non--decreasing
Hilbert functions.
\end{cor}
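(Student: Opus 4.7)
The plan is to realize this corollary as a direct combination of two results already proved in the paper: Remark \ref{fs}, which asserts that, for a given $q$, only finitely many gluings of $S_1$ and $S_2$ fail to be specific, together with Theorem \ref{Hn}, which propagates the non-decreasing Hilbert function property through any specific gluing. So the argument is essentially a two-line assembly; the real work was done in those earlier statements.

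First I would pin down precisely what Remark \ref{fs} is saying for fixed $q$. The specific gluing condition from Definition \ref{sg} reads $\ord_{S_2}(q) + l_q(S_2) \leq \ord_{S_1}(p)$, and with $q$ fixed the left-hand side is a fixed positive integer $N = \ord_{S_2}(q) + l_q(S_2)$. So a gluing with parameter $p$ is specific precisely when $\ord_{S_1}(p) \geq N$. Writing $p = \sum r_i m_i$ with $\sum r_i = \ord_{S_1}(p)$ yields the bound $p \leq \ord_{S_1}(p)\, m_d$, hence $\ord_{S_1}(p) \geq p/m_d$, and therefore the set $\{p \in S_1 : \ord_{S_1}(p) < N\}$ is finite. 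Combined with the other constraints from Definition \ref{G} (that $p \in S_1$ satisfies $\gcd(p,q)=1$ and $p \notin \{m_1,\ldots,m_d\}$), this shows that only finitely many admissible $p$ produce a non-specific gluing.

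Finally I would apply Theorem \ref{Hn}: whenever $S = \langle qm_1,\ldots,qm_d,pn_1,\ldots,pn_k\rangle$ is a specific gluing of $S_1$ and $S_2$ and $S_1$ has a non-decreasing Hilbert function, so does $S$. Combining the two observations gives that all but finitely many gluings with fixed $q$ yield a semigroup with non-decreasing Hilbert function. I do not anticipate any real obstacle here; the only subtlety, which is already implicit in Remark \ref{fs}, is the elementary bound $\ord_{S_1}(p) \geq p/m_d$ used to turn the specific-gluing inequality into a finiteness statement about $p$.
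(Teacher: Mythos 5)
Your proposal is correct and follows exactly the paper's own argument, which cites Theorem \ref{Hn} together with Remark \ref{fs}; your additional justification of the finiteness in Remark \ref{fs} via the bound $\ord_{S_1}(p)\geq p/m_d$ is a valid elaboration of what the paper leaves as "clear from the definition."
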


\begin{proof}
It follows by Theorem \ref{Hn} and Remark \ref{fs}.
\end{proof}

It is well known that a numerical semigroup $S$ of embedding dimension $n>1$ is free if and only if $S$ is a gluing of a  free numerical semigroup
of embedding dimension $n-1$ and $\N$, see \cite[Theorem 8.16]{GR}. Hence $S$ may be described as $S = <qm_1, \dots , qm_d, p>$ where $S_1 = <m_1, \dots, m_d>$ is a free numerical semigroup of multiplicity $d$, $p\in S_1 \setminus \{m_1, \dots m_d\}$, $q>1$  and $\gcd(p,q)=1$. In particular, free numerical semigroups are complete intersection, see \cite[Corollary 9.7]{GR}. Observe that a free numerical semigroup may be obtained as consecutive gluings of free numerical semigroups and $\N$, starting from $\N$ itself. If each of these consecutive gluings is nice (equivalently specific under these hypothesis), then we get from Corollary \ref{niceCM} and by induction that such a numerical semigroup has a Cohen-Macaulay tangent cone and so non-decreasing Hilbert function.

Now, we apply our techniques on this kind of gluing, to get more families of Gorenstein monomial curves with non--decreasing Hilbert functions.
First recall the following definition from \cite{AM}.
\begin{defn}
 The semigroup $S=<qm_1,\ldots,qm_d,p>$, obtained
by gluing of $S_1$ and $\N$, is called an extension of $S_1$. If it
is a nice gluing, then we call $S$ a nice extension. In other words
$S$ is an extension of $S_1$ when $q>1$, $p\in
S_1\setminus\{m_1,\ldots,m_d\}$ and $\gcd(p,q)=1$.  Moreover, if
$q\leq \ord_{S_1}(p)$, then $S$ is called a nice extension of $S_1$.
\end{defn}

As a special case of nice gluing, we know by Corollary \ref{niceCM}, that if $G(S_1)$ is Cohen-Macaulay, then any nice extension of $S_1$ has
Cohen-Macaulay tangent cone. The following example shows that we can
not remove the nice condition even for extensions.

\begin{ex}
Let $S_1=<2,5>$, $p=2+5=7$ and $q=3$. Then $S=<6, 15, 7>$ is an
extension of $S_1$ which is not nice. Now $15+6=21=3\times7$ so that
$\ord_S(15+6)>2=\ord_{S_1}(15)+\ord_{S_1}(6)$. Hence $G(S)$ is not
Cohen-Macaulay by Corollary \ref{CM}.
\end{ex}

The following result provides a large family of extensions whose tangent cones are always Cohen-Macaulay, independently from
the tangent cone of $S_1$. The condition we impose is somehow complementary to being a nice extension.

\begin{thm}\label{ext}
Let $S=<qm_1,\ldots,qm_d,p>$ be an extension of $S_1$. If $p<q$,
then  $G(S)$ is Cohen-Macaulay and so the Hilbert function of $S$ is
non--decreasing.
\end{thm}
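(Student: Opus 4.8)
The plan is to verify the criterion of Lemma \ref{CM}(4) relative to the multiplicity of $S$. Since $p<q\le qm_1$, Remark \ref{m} gives $m(S)=p$, and $p$ is a minimal generator so $\ord_S(p)=1$. Thus $G(S)$ is Cohen-Macaulay exactly when $\ord_S(u+p)=\ord_S(u)+1$ for every $u\in S$. As $\ord_S(u+p)\ge\ord_S(u)+\ord_S(p)=\ord_S(u)+1$ always holds, the whole content is the reverse inequality $\ord_S(u+p)\le\ord_S(u)+1$.

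Throughout I would use the description of the order coming from Lemma \ref{ex}: every element of $S$ can be written $qx'+py'$ with $x'\in S_1$ and $y'\in\N$, and $\ord_S(qx'+py')=\max\bigl(\ord_{S_1}(x')+y'\bigr)$, the maximum taken over all such representations. Fix $u$ and choose a representation of $u+p$ realizing the order, say $u+p=qx+cp$ with $x\in S_1$, $c\ge 0$ and $\ord_S(u+p)=\ord_{S_1}(x)+c$. If $c\ge 1$, then $u=qx+(c-1)p$ is a representation of $u$, so $\ord_S(u)\ge\ord_{S_1}(x)+(c-1)=\ord_S(u+p)-1$ and we are done.

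It remains to treat $c=0$, i.e. $u+p=qx$ with $\ord_S(u+p)=\ord_{S_1}(x)$. Here $u=qx-p\in S$, so $u=qx'+py'$ for some $x'\in S_1$, $y'\ge 0$; comparing the two expressions and using $\gcd(p,q)=1$ forces $y'+1=\mu q$ and $x-x'=\mu p$ for some $\mu\ge 1$, whence $x-p=x'+(\mu-1)p\in S_1$. Using the representation $u=q(x-p)+(q-1)p$ we obtain $\ord_S(u)\ge\ord_{S_1}(x-p)+(q-1)$, so $\ord_S(u)\ge\ord_S(u+p)-1$ will follow as soon as $\ord_{S_1}(x)\le\ord_{S_1}(x-p)+q$. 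Therefore the Cohen-Macaulayness of $G(S)$ reduces to the single inequality
\[
\ord_{S_1}(Y+p)\le\ord_{S_1}(Y)+q\qquad\text{for all }Y\in S_1,
\]
which I will call $(\star)$.

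It is here that the hypothesis $p<q$ must enter. Applying Remark \ref{L} in $S_1$ gives $\ord_{S_1}(Y+p)-\ord_{S_1}(Y)\le\ord_{S_1}(p)+l_p(S_1)$, so $(\star)$ follows from the estimate $\ord_{S_1}(p)+l_p(S_1)\le p$; that is, adding the semigroup element $p$ raises the order in $S_1$ by at most its own value $p<q$. I expect this last estimate to be the main obstacle. A purely valuative bound (each generator has value $\ge m_1$) is too weak for $Y$ of large value, and one genuinely needs that $S_1$ is \emph{minimally} generated, so that elements of large value automatically have large order. Concretely I would write $\ord_{S_1}(p)+l_p(S_1)=\max_{Y}\bigl(\ord_{S_1}(Y+p)-\ord_{S_1}(Y)\bigr)$ and, exactly as in Remark \ref{L}, reduce the maximum to finitely many $Y$ with $\ord_{S_1}(Y)$ at most the reduction number, then control the order drop caused by removing $p$ on this bounded set. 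Once $(\star)$ is established, $G(S)$ is Cohen-Macaulay, and the non-decreasing property of the Hilbert function of $S$ follows as recorded in Section 1.
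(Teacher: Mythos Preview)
Your reduction to the inequality $(\star)$ is where the argument stalls: you explicitly flag it as the ``main obstacle'' and only sketch a vague plan to bound $\ord_{S_1}(p)+l_p(S_1)$ by $p$, without carrying it out. As written, the proof is therefore incomplete. More importantly, $(\star)$ is unnecessary and you have already done enough to finish. In your case $c=0$ you derive $u=q(x-p)+(q-1)p$ with $x-p\in S_1$, hence
\[
\ord_S(u)\ \ge\ \ord_{S_1}(x-p)+(q-1)\ \ge\ q-1\ \ge\ p,
\]
using only $p<q$. But the reduction number $r$ of $S$ satisfies $r\le m(S)-1=p-1$, and as recorded just before Corollary~\ref{CMl} one has $\ord_S(u+p)=\ord_S(u)+1$ automatically whenever $\ord_S(u)\ge r$. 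Since $\ord_S(u)\ge p>p-1\ge r$, the case $c=0$ is already settled with no appeal to $(\star)$.

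The paper's proof uses exactly this restriction from the outset: by Corollary~\ref{CMl} it only checks $\ord_S(s+p)=\ord_S(s)+1$ for $s$ with $\ord_S(s)\le p-1$. It then fixes maximal expressions $s=qz_1+pz_2$ and $s+p=qs_1+ps_2$ and compares them via $\gcd(p,q)=1$; the hypothesis $p<q$ enters through the bound $z_2\le p-1<q$, which forces the $p$-coefficient to change by exactly one. Your approach of starting from a maximal expression of $u+p$ and peeling off one copy of $p$ is a legitimate variant and arguably cleaner in the case $c\ge 1$; the single ingredient you missed is that the reduction-number bound immediately disposes of the remaining case, making the excursion into $S_1$ via $(\star)$ superfluous.
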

\begin{proof}
Note that $m(S)=p$ and so by using Corollary \ref{CMl} it is enough to show that $\ord_S(s+p)=\ord_S(s)+1$ for all $s\in S$ with $\ord_S(s)\leq p-1$.
Let $s=qz_1+pz_2$ for some $z_1\in S_1$ and $z_2\in\N$ such that $\ord_S(s)=\ord_{S_1}(z_1)+z_2\leq p-1$ (that exists by Lemma \ref{ex}). There also exist $s_1\in S_1$ and $s_2\in\N$ such that
\begin{equation}
s+p=qz_1+p(z_2+1)=qs_1+ps_2
\end{equation}
and  $\ord_S(s+p)=\ord_{S_1}(s_1)+s_2$. If $s_2>z_2+1$, then $qz_1=qs_1+\alpha p$, where $\alpha=s_2-z_2-1>0$. Now, since $\gcd(q,p)=1$ we have that $\alpha = q\alpha'$ with $\alpha'>0$. Thus
$\ord_S(qz_1)\geq \ord_S(\alpha'pq) = \alpha' q \geq q >p$, which is a contradiction. Hence $z_2+1-s_2\geq 0$ and we may write $qz_1+p(z_2+1-s_2)=qs_1$. Again because $\gcd(q,p)=1$ we have that $z_2+1-s_2=\beta q$,
for some $\beta\geq 0$. If $\beta \neq 0$ then $0<z_2+1-s_2\leq p-s_2<q-s_2$ and so $\beta q<q-s_2$. Hence $\beta=0$ and we get
$$\begin{array}{ll}
\ord_S(s+p)&=\ord_{S_1}(s_1)+s_2\\
&=\ord_{S_1}(z_1)+z_2+1=\ord_S(s)+1.\end{array}$$
as we wanted to show.
\end{proof}

\begin{ex}
Let $S_1=<5,6,13>$. Then $G(S_1)$ is not Cohen-Macaulay by
\cite[Example 2.4]{CZ1}. Now consider $S=<11,60,68,156>$ as an
extension of $S_1$ by $p=11$ and $q=12$. Note that
$\ord_{S_1}(p)=2<q$ and so $S$ is not a nice extension of $S_1$.
Using the NumericalSgps package of GAP \cite{Num} we obtain the
following Apéry table of $S$.
\[
\begin{array}{|c|c|c|c|c|c|c|c|c|c|c|c|}\hline
 \AP(S)& 11 & 60 & 72& 120& 144& 156& 180& 216& 228& 240& 300\\
\hline
 \AP(M)& 11 & 60 & 72& 120& 144& 156& 180& 216& 228& 240& 300\\
\hline
\AP(2M)& 22&  71&  83& 120& 144& 167& 180& 216& 228& 240& 300\\
 \hline
\AP(3M)& 33& 82& 94&  131&  155& 178& 180& 216& 239& 240& 300\\
\hline
\AP(4M)& 44& 93& 105& 142& 166&  189& 191& 227&  250& 240& 300\\
\hline
\AP(5M) & 55& 104& 116& 153& 177& 200& 202& 238& 261& 251& 300\\
\hline  \end{array} \] Since there is no true landing in the Apéry
table of $S$, we obtain that $G(S)$ is Cohen-Macaulay by Remark
\ref{lr} and Corollary \ref{CMl}.
\end{ex}

The assumption $p<q$ in  Theorem \ref{ext} implies that $p<qm_1$ and so $p = m(S) < qm_1$. But this condition is not enough, as it may happen that $q<m(S)=p<qm_1$ and the following example shows that,
in this case, the tangent cone of $S$ is not necessarily Cohen-Macaulay, even if the tangent cone of $S_1$ is Cohen-Macaulay.

\begin{ex}
Let $S=<5, 8, 28>$, then $S$ is an extension of $S_1=<2,7>$ with $q=4$ and $p=5$. Note that $28+4\times 5=6\times 8$ and so $\ord_S(28+4\times 5)>1+4$. Hence
$G(S)$ is not Cohen-Macaulay by Corollary \ref{CM}.
\end{ex}

The following corollary provides an easy way to get numerical
semigroups with non-decreasing Hilbert functions.

\begin{cor}
Let $S=<a_1,\ldots,a_n>$ be a numerical semigroup with embedding dimension $n>1$. If $\gcd(a_2,\ldots,a_n)>a_1$, then
$G(S)$ is Cohen-Macaulay and so the Hilbert function of $S$ is
non--decreasing.
\end{cor}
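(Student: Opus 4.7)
The plan is to put $S$ into the format of Theorem \ref{ext}. Set $q := \gcd(a_2,\ldots,a_n)$, write $a_i = qb_i$ for $i = 2,\ldots,n$, and let $p := a_1$. Then $\gcd(b_2,\ldots,b_n) = 1$, so $S_1 := <b_2,\ldots,b_n>$ is a numerical semigroup, and since $\gcd(a_1,\ldots,a_n) = 1$ while $q$ divides every $a_i$ with $i \geq 2$, we have $\gcd(p,q) = 1$. The multiplicity of $S$ is $p$, because $p = a_1 < q \leq qb_i$ for every $i \geq 2$ forces $a_1$ to be the smallest element of the generating set. The hypothesis of the corollary translates directly into $p < q$, which is exactly the inequality required by Theorem \ref{ext}.

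With this setup in hand I would invoke Theorem \ref{ext} to conclude that $G(S)$ is Cohen-Macaulay. The non--decreasing Hilbert function then follows at once from the standard fact, recalled at the beginning of Section 4, that a Cohen-Macaulay tangent cone has non--decreasing Hilbert function.

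The main obstacle, which I expect to be the only delicate point, is that the formal definition of extension (the specialization of Definition \ref{G} to $S_2 = \N$) requires $p \in S_1 \setminus \{b_2,\ldots,b_n\}$, and the condition $p = a_1 \in S_1$ is not automatically guaranteed by the corollary's hypothesis: for instance $S = <4,15,25>$ satisfies the hypothesis with $q = 5$ and $S_1 = <3,5>$, yet $a_1 = 4 \notin S_1$. To deal with this I would reinspect the proof of Theorem \ref{ext} and observe that it never actually invokes $p \in S_1$. The argument relies only on (i) Lemma \ref{ex}, which decomposes any $u \in S$ as $u = qx + py$ with $x \in S_1$ and $y \in \N$ compatibly with orders, and whose proof itself only requires that the $qb_i$'s together with $p$ generate $S$; (ii) the fact that $p$ is the multiplicity of $S$; (iii) $\gcd(p,q) = 1$; and (iv) $p < q$. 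All four conditions hold in our situation, so the case analysis in the proof of Theorem \ref{ext} carries through verbatim and delivers the Cohen-Macaulay property.
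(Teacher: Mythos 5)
Your proposal is correct and follows the same route as the paper: both reduce the statement to Theorem \ref{ext} by setting $q=\gcd(a_2,\ldots,a_n)$, $p=a_1$ and $S_1=<a_2/q,\ldots,a_n/q>$, noting that $\gcd(p,q)=1$ and $p<q$. The extra care you take is in fact warranted: the paper's proof simply asserts that $S$ \emph{is} an extension of $S_1$, which is not literally true in general because Definition \ref{G} requires $p\in S_1$, and your example $S=<4,15,25>$ (where $q=5$, $S_1=<3,5>$ and $p=4\notin S_1$) shows this membership can fail under the corollary's hypotheses. Your resolution --- observing that the proof of Theorem \ref{ext} only uses the generating set $\{qb_2,\ldots,qb_n,p\}$ together with $m(S)=p$, $\gcd(p,q)=1$ and $p<q$, and never invokes $p\in S_1$ (Lemma \ref{ex} likewise needs only the generators) --- is exactly the right way to close this small gap, so your write-up is if anything more complete than the paper's one-line argument.
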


\begin{proof}
Let $d=\gcd(a_2,\ldots,a_n)$, then $S$ is an extension of $S_1=<\frac{a_2}{d},\ldots,\frac{a_n}{d}>$ with $p=a_1$ and $q=d$. Now the result follows by Theorem \ref{ext}.
\end{proof}

As a consequence of Corollary \ref{ngh} and Theorem \ref{ext} we have the following result, which illustrates that
  for a numerical semigroup $S_1$ with non--decreasing Hilbert function and an integer $q>1$, all extensions of $S_1$ by $q$, except finitely many of them which in fact are bounded by a strong numerical condition, have non--decreasing Hilbert functions.

\begin{thm}
Let $S=<qm_1,\ldots,qm_d,p>$ be an extension of $S_1$. Assume that $S_1$ has a non--decreasing Hilbert function.
 If the Hilbert function of $S$ is decreasing, then $\ord_{S_1}(p)<q<p$.
\end{thm}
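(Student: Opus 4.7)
My plan is to prove the contrapositive: assuming that either $q\leq \ord_{S_1}(p)$ or $q\geq p$, I will show that the Hilbert function of $S$ is non-decreasing. This reduces the statement to a clean case analysis, invoking the two main tools already established in this section, namely Corollary \ref{ngh} (on nice gluings) and Theorem \ref{ext} (on extensions with $p<q$). The point is that the two excluded regimes of $q$ correspond precisely to these two situations.

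In the first case, when $q\leq \ord_{S_1}(p)$, the extension $S=\langle qm_1,\ldots,qm_d,p\rangle$ is by definition a nice extension of $S_1$, i.e. a nice gluing of $S_1$ and $S_2=\N$. Since $G(\N)$ is trivially Cohen-Macaulay (it is a polynomial ring in one variable), and since $S_1$ has a non-decreasing Hilbert function by hypothesis, Corollary \ref{ngh} immediately yields that $G(S)$ has a non-decreasing Hilbert function.

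In the second case, when $q\geq p$, I first want to argue that actually $q>p$. Indeed, $p\in S_1\setminus\{m_1,\ldots,m_d\}$ forces $p>m_1\geq 1$, so $p\geq 2$; likewise $q\geq 2$ by the definition of extension. Since $\gcd(p,q)=1$, equality $p=q$ is excluded, and hence $q>p$. Theorem \ref{ext} then applies directly to give that $G(S)$ is Cohen-Macaulay, which in particular forces the Hilbert function of $S$ to be non-decreasing.

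Combining these two cases gives the contrapositive, hence the statement. I do not foresee a genuine obstacle here: the proof is essentially a bookkeeping exercise showing that the complement of the open interval $(\ord_{S_1}(p),p)$ is exactly covered by the hypotheses of the two earlier results. The only subtle point is the boundary value $q=p$, which needs to be ruled out using the coprimality condition $\gcd(p,q)=1$ together with $p,q\geq 2$.
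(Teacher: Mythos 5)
Your proof is correct and follows exactly the route the paper intends: the theorem is stated there as an immediate consequence of Corollary \ref{ngh} (the case $q\leq \ord_{S_1}(p)$, i.e.\ a nice extension of $S_1$ by $\N$, whose tangent cone $G(\N)$ is trivially Cohen--Macaulay) and Theorem \ref{ext} (the case $p<q$), with coprimality ruling out $p=q$. Nothing to add.
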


\end{document}